\newcommand*{\rom}[1]{\expandafter\@slowromancap\romannumeral #1@}
\newcommand{\argmin}{{\rm argmin}}
\newcommand{\vect}[1]{\bold{#1}}
\newcommand*\samethanks[1][\value{footnote}]{\footnotemark[#1]}
\newtheorem{Lemma}[theorem]{Lemma}
\title{A two-stage classification method  for high-dimensional data  and point clouds }
\author{Xiaohao Cai\thanks{Mullard Space Science Laboratory (MSSL),  University College London, Surrey RH5 6NT, UK.
  Email:  x.cai@ucl.ac.uk. }
  \and
  Raymond Chan\thanks{Department of Mathematics, City University of Hong Kong,
  Kowloon Tong, Hong Kong. Email:  rchan.sci@cityu.edu.hk.}
  \and
  Xiaoyu Xie\thanks{Department of Mathematics, The Chinese University of Hong Kong,
  Shatin, Hong Kong. Emails: xyxie@math.cuhk.edu.hk and zeng@math.cuhk.edu.hk.}
  \and
  Tieyong Zeng\samethanks[3]
  }
\begin{document}

\maketitle

\begin{abstract}
High-dimensional data classification is a fundamental task in machine learning and imaging science. In this paper, we propose a two-stage multiphase semi-supervised classification method for classifying high-dimensional data and unstructured point clouds. To begin with, a fuzzy classification method such as the standard support vector machine is used to generate a warm initialization.
We then apply a two-stage approach named SaT (smoothing and thresholding) to improve the classification. In the first stage, an unconstraint convex variational model is implemented to purify and smooth the initialization, followed by the second stage which is to project the smoothed partition obtained at stage one to a binary partition. These two stages can be repeated, with the latest result as a new initialization, to keep improving the classification quality. We show that the convex model of the smoothing stage has a unique solution and can be solved by a specifically designed primal-dual algorithm whose convergence is guaranteed.
We test our method and compare it with the state-of-the-art methods on several benchmark data sets. The experimental results demonstrate clearly that our method is superior in both the classification accuracy and computation speed for high-dimensional data and point clouds.
\end{abstract}

\begin{keywords}
Semi-supervised clustering, point cloud classification, variational methods, Graph Laplacian,  SaT (smoothing and thresholding).
\end{keywords}


\section{Introduction}\label{sec:introduction}
Data sets classification is a fundamental task in remote sensing, machine learning, computer vision, and imaging science
\cite{BM17,YT18,MBYL17,SM00,LCSCL17}. The task, simply speaking, is to group the given data into different classes such that, on one hand,
data points within the same class shares similar characteristics (e.g. distance, edges, intensities, colors, and textures);
on the other hand, pairs of different classes are as dissimilar as possible with respect to certain features. In this paper,  we focus on the task of multi-class semi-supervised classification. The total number of classes $K$ of the
given data sets is assumed to be known, and a few samples, namely the training points, in each class have been labeled. The goal is therefore to infer the labels of the remaining data points using the knowledge of the labeled ones.

For data classification, previous methods are generally based on graphical models, see e.g. \cite{BM17,OWO14,YT18}, and references therein.
In a weighted graph, the data points are vertices and the edge weights signify the affinity or similarity between pairs of data points, where the larger the edge
weight is, the closer or more similar the two vertices are. The basic assumption
for data classification is that vertices in the graph that are connected by edges with large weight should belong to the same class.
Since a fully connected graph is dense and has the size as large as the square of the number of vertices, it is computationally expensive to work on it directly.
In order to circumvent this, some cleverly designed approximations have been developed. For example, in \cite{GMBFP14,MKB13},  spectral approaches are proposed to
efficiently calculate the eigendecomposition of a dense graph Laplacian. In \cite{ELB08,MBBT15}, the nearest neighbor strategy was adopted to build up a sparse graph
where most of its entries are zero, and therefore it is computationally efficient.

In the literature, various studies for semi-supervised classification have been performed by computing a local minimizer of some non-convex energy functional or minimizing a relevant convex relaxation.
To name just a few, we have the diffuse interface approaches using phase field representation based on partial differential equation techniques \cite{BF12,LET12},
the MBO scheme established for solving the diffusion equation \cite{MBO92,MKB13,GMBFP14}, and the variational methods based on graph cut \cite{BM17,YT18}.
In particular, in \cite{BM17}, the convex relaxation models and special constraints on class sizes were investigated. In \cite{YT18}, some novelty region-force terms were introduced
in the variational models to enforce the affinity between vertices and training samples. To the best of our knowledge, all these proposed variational models
have the so-called {\it no vacuum and overlap constraint} on the labeling functions, {which gives rise to non-convex model with NP-hard issues}. By allowing labeling functions to take values in the unit simplex, the original NP-hard combinatorial problem is rephrased into a continuous setting,
see e.g. \cite{BM17,YT18,MBYL17,SM00,YS03,HS11,BTCS13,CCZ13} for various continuous relaxation techniques
(e.g. the ones based on solving the eigenvalue problem, convex approximation, or non-linear optimization) and references therein.

Image segmentation can also be viewed as a special case of the data classification problem \cite{SM00,BF06},
since the pixels in an image can be treated as individual points. Various studies and many algorithms have been considered for
image segmentation. In particular,  variational methods are among the most successful image segmentation techniques,  see e.g. \cite{MS89,CRD07,C15,CCNZ15,DCS10,BCCJKM11,BEVTO07}.
The Mumford-Shah model \cite{MS89}, one of the most important variational segmentation models, was proposed to find piecewise smooth representations of different
segments. It is, however, difficult to solve since the model is non-convex and non-smooth.
Then substantially rich follow-up works were conducted, and many of them considered compromise techniques such as: (i)
simplifying the original complex model, e.g. finding piecewise constant solutions instead of piecewise smooth solutions \cite{CV01-2,LNZS10,VC02}); (ii)
performing convex approximations, e.g. using convex regularization terms like total variation \cite{ROF92,CNCP10}; or (iii) using the smoothing and thresholding (SAT) segmentation methodology \cite{CCZ13,CCSSZ18,CCNZ15,CS13,CYZ13};
for more details refer to e.g. \cite{CEN06,HHMSS12,BCB12,LS12,CEN06,PCCB09,PCBC09a,YBTB10,BEF84} and references therein.
Moreover, various applications were put forward for instance in optical flow \cite{CFNSS14}, tomographic imaging \cite{BCPSS17},
and medical imaging \cite{ZMSM08,CCMS12,CCMS13,BSRT17,SHD15,CSL16}.

In this paper, we propose a multi-class semi-supervised data classification method based on the SaT segmentation methodology
\cite{CCZ13,CCSSZ18,CCNZ15,CS13,CYZ13}. It has been shown to be very promising in terms of segmentation quality and computation speed for images corrupted by many different types of blurs and noises.
Briefly speaking, the SaT methodology includes two main steps: the first step is to obtain a smooth approximation of the given image through minimizing some convex models; and the second step is to get the segmentation results by thresholding the smooth approximation, e.g.
using thresholds determined by the K-means algorithm \cite{KMNPSW02}. Since the models used are convex, the non-convex and NP-hard issues in many existing variational segmentation methods
(e.g. the Mumford-Shah model and piecewise constant Mumford-Shah model mentioned above) were naturally avoided.

Our proposed data classification method mainly contains two stages
with a warm initialization. The warm initialization is a fuzzy classification result which can be generated by any standard classification methods such as the support vector machine (SVM) \cite{CV95}; or by labeling the given data randomly if no proper method is available for the given data (e.g. the data set is too large). Its accuracy is not critical since our proposed method will improve the accuracy significantly from this starting point.

With the warm initialization, the first stage of our method is to find a set of smooth labeling functions, where each gives the probability of every point being in a particular class. They are obtained by minimizing a properly-chosen convex objective functional. In detail, the convex objective functional contains $K$ independent convex sub-minimization
problems, where each corresponds to one labeling function, with no constraints between these $K$ labeling functions.
For each sub-minimization problem, the model is formed by three terms: (i) the data fidelity term restricting the distance between
the smooth labeling function and the initialization; (ii) the graph Laplacian ($\ell_2$-norm) term, and (iii) the total variation ($\ell_1$-norm) built on the graph of the given data.
The graph Laplacian and the total variation terms regularize the labeling functions to be smooth but at the same time close to a representation on the unit simplex.

After obtaining the set of labeling functions, the second stage of our method is just to project the fuzzy classification results obtained at stage one onto the unit simplex to obtain a binary classification result. This step can be done straightforwardly. To improve the classification accuracy, these two stages can be repeated iteratively, where at each iteration the result at the previous iteration is used as a new initialization.

The main advantage of our proposed method is twofold. First, it performs outstandingly in computation speed, since the proposed model is convex and the $K$ sub-minimization
problems are independent with each other (with no constraint on the $K$ labeling functions). The parallelism strategy can be applied straightforwardly to improve computation performance further. On the contrary, the standard start-of-the-art variational data classification methods e.g. \cite{YT18,GMBFP14,LET12} have the constraint on
unit simplex in their minimization models, so that the non-convex or NP-hard issues can affect seriously the efficiency of these methods, even though some convex relaxations may be applied.  Secondly, our method is generally superior in classification accuracy, due to its flexibility of merging the warm initialization and the two-stage iterations which are tractable and manage to improve the accuracy gradually. Note again that we are solving a convex model in the first stage of each iteration, which guarantees a unique global minimizer.
In contrast, there is however no guarantee that the results obtained by the standard start-of-the-art variational data classification methods e.g. \cite{YT18,GMBFP14,LET12} are global
minimizers.
The effectiveness of iterations in our proposed method will be shown in the experiments. For most cases, the clustering accuracy would be increased by a significant margin compared to the first initialization and generally outperforms the state-of-the-art variational classification methods.

The paper is organized as follows. In Section \ref{sec:notation}, we give the basic notation used throughout the paper.
In Section \ref{sec:method}, we present our method for data sets classification. In Section \ref{sec:alg}, we present the algorithm for solving the proposed model and its convergence proof. In Section \ref{sec:numerics},
we test our method on benchmark data sets and compare it with the start-of-the-art methods.
Conclusions are drawn in Section \ref{sec:conclusions}.

\section{Basic notation}\label{sec:notation}
Let $G = (V, E, w)$ be a weighted undirected graph representing a given point cloud, where $V$ is the vertex set (in which each vertex represents a point) containing $N$ vertices, $E$ is the edge set consisting of pairs of vertices, and $w : E \rightarrow \mathbb{R}_{+}$ is the weight function defined on the edges in $E$. The weights $w(\vect x, \vect y)$ on the edges $(\vect x,\vect y) \in E$ measure the similarity between the two vertices $\vect x$ and $\vect y$; the larger the weight is, the more similar (e.g. closer in distance) the pair of the vertices is.

There are many different ways to define the weight function. Let $d(\cdot, \cdot)$ be a distance metric. Several particularly popular definitions of weight functions are as follows:
(i) radial basis function
\begin{equation}\label{eqn:radial_basis}
w(\vect x,\vect y) := \exp(-d(\vect x,\vect y)^2/(2\xi)), \quad \forall (\vect x, \vect y) \in E,
\end{equation}
for a prefixed constant $\xi >0$;
(ii) Zelnic-Manor and Perona weight function
\begin{equation}\label{eqn:z-m-p_weight}
w(\vect x,\vect y) := \exp(-d(\vect x,\vect y)^2/({\rm var}(\vect x) {\rm var}(\vect y))), \quad \forall (\vect x, \vect y) \in E,
\end{equation}
where ${\rm var}(\cdot)$ denotes the local variance;
and (iii) the cosine similarity
\begin{equation}\label{eqn:cos_weight}
w(\vect x,\vect y) := \dfrac{\langle \vect x,\vect y\rangle}{\sqrt{\langle \vect x,\vect x\rangle\langle \vect y,\vect y\rangle}}, \quad \forall (\vect x, \vect y) \in E,
\end{equation}
where $\langle\cdot,\cdot\rangle$ is the inner product.

Let $W = (w(\vect x,\vect y))_{(\vect x, \vect y)\in E} \in \mathbb{R}^{N\times N}$, the so-called affinity matrix, which is usually assumed to be a symmetric matrix with non-negative entries. Let $D = (h(\vect x,\vect y))_{(\vect x, \vect y)\in E}$ $ \in \mathbb{R}^{N\times N}$ be the diagonal matrix, where its diagonal entries are equal to the sum of the entries on the same row in $W$, i.e.
\begin{equation}
h(\vect x, \vect y) :=
\begin{cases}
\sum_{z\in V} w(\vect x, \vect z), & \vect x = \vect y, \\
0, & {\rm otherwise}.
\end{cases}
\end{equation}
Let ${\vect u} = (u(\vect x))_{\vect x\in V}^\top \in \mathbb{R}^N$, an $N$-length column vector.
Define the graph Laplacian as $L = D - W$, and the gradient operator $\nabla$ on $u(\vect x), \forall \vect x \in V$, as
\begin{equation} \label{eqn:nabla}
\nabla u(\vect x) := \left(w(\vect x,\vect y)(u(\vect x)-u(\vect y))\right)_{(\vect x, \vect y)\in E}.
\end{equation}
Then define the $\ell_1$-norm of an $N$-length vector as
\begin{equation} \label{eqn:nabla-l1}
\|\nabla {\vect u}\|_1 := \sum_{\vect x\in V} |\nabla u(\vect x)| =  \sum_{(\vect x, \vect y)\in E}|w(\vect x,\vect y)(u(\vect x)-u(\vect y))|,
\end{equation}
and the $\ell_2$-norm (also known as Dirichlet energy)
\begin{equation} \label{eqn:nabla-l2}
\|\nabla {\vect u}\|^2 := \dfrac{1}{2}\vect{u}^\top L\vect{u}=\dfrac{1}{2} \sum_{(\vect x, \vect y)\in E}w(\vect x,\vect y)(u(\vect x)-u(\vect y))^2.
\end{equation}
Note, however, that working with
the fully connected graph $E$---like the setting in \eqref{eqn:nabla}, \eqref{eqn:nabla-l1} and \eqref{eqn:nabla-l2}---can be highly computational demanding.

In order to reduce the computational burden, one often only considers the set of edges with large weights. In this paper,
the $k$-nearest-neighbor ($k$-NN) of a point $\vect x$, ${\cal N}(\vect x)$, is used to replace the whole edge set starting from the point $\vect x$ in $E$. Besides the computational saving, one additional benefit of using
$k$-NN graph is its capability to capture local property of points lying close to a manifold.
With the $k$-NN graph, then the definitions in \eqref{eqn:nabla}, \eqref{eqn:nabla-l1} and \eqref{eqn:nabla-l2} become
\begin{gather}
\nabla u(\vect x)  = \left(w(\vect x,\vect y)(u(\vect x)-u(\vect y))\right)_{\vect y\in {\cal N}(\vect x)},  \label{eqn:nabla-n} \\
\|\nabla {\vect u}\|_1  := \sum_{\vect x\in V} |\nabla u(\vect x)| = \sum_{\vect x\in V} \sum_{\vect y\in {\cal N}(\vect x)}|w(\vect x,\vect y)(u(\vect x)-u(\vect y))|, \label{eqn:nabla-l1-n}
\end{gather}
and
\begin{equation} \label{eqn:nabla-l2-n}
\|\nabla {\vect u}\|^2 :=
\dfrac{1}{2}\vect{u}^\top L\vect{u}=\dfrac{1}{2}\sum_{\vect x\in V} \sum_{\vect y\in {\cal N}(\vect x)}w(\vect x,\vect y)(u(\vect x)-u(\vect y))^2,
\end{equation}
respectively, see e.g. \cite{LET12,YT18} for more detail.

\section{Proposed data classification method}\label{sec:method}
\subsection{Preliminary}
Given a point cloud $V$ containing $N$ points in $\mathbb{R}^M$. We aim to partition $V$ into $K$ classes $V_1, \cdots, V_K$ based on their similarities
(the points in the same class possess high similarity), with a set of training points $T=\{T_j\}_{j=1}^K \subset V$, $|T| = N_T$. Note that $T_j \subset V_j$ for $j = 1, \ldots, K$. In other words, we aim to assign the points in $V\setminus T$ certain labels between 1 to $K$ using the training set $T$ in which the labels of points are known, and the partition satisfies no vacuum and overlap constraint:
\begin{equation} \label{eqn:partition}
V=\bigcup_{j=1}^K V_j \quad {\rm  and  } \quad V_i\cap V_j = \emptyset, \quad \forall i\neq j, 1\le i, j \le K.
\end{equation}
In the rest of the paper, we denote the points in $V$ needed to be labeled as $S = V\setminus T$, and call $S$ the test set in $V$.

The constraint \eqref{eqn:partition} can be described by a binary matrix function
$U := ({\vect u}_1, \cdots, {\vect u}_K) \in \mathbb{R}^{N\times K}$ (also called partition matrix),
with ${\vect u}_j = (u_j(\vect x))_{\vect x\in V}^\top \in \mathbb{R}^N : V \rightarrow \{0, 1\}$ defined as
\begin{equation} \label{eqn:partition-label}
u_{j}(\vect x) :=
\begin{cases}
1, & \vect x \in V_j, \\
0, & {\rm otherwise},
\end{cases}
\quad
\forall \vect x\in V, j = 1, \ldots, K.
\end{equation}
Clearly, the above definition yields $\sum_{j=1}^K {u}_j(\vect x) = 1, \forall \vect x \in V$. The constraint \eqref{eqn:partition-label} is also known as the indicator constraint on the unit simplex. Since the binary representation in \eqref{eqn:partition-label} generally requires solving a non-convex model with NP-hard issue, a common strategy---the convex unit simplex---is considered as an alternative
\begin{equation} \label{eqn:partition-label-relax}
\sum_{j=1}^K {u}_{j} (\vect x) = 1, \quad\forall \vect x \in V, \quad {\rm s.t.} \quad {u}_{j}(\vect x) \in [0, 1], j = 1, \ldots, K.
\end{equation}
Note, importantly, that the convex constraint \eqref{eqn:partition-label-relax} can overcome the NP-hard issue and make some subproblems
convex, but generally the whole model can still be non-convex. Therefore, solving a model with constraint \eqref{eqn:partition}, \eqref{eqn:partition-label},
or \eqref{eqn:partition-label-relax} can be time consuming, see e.g.  \cite{LET12,YT18} for more detail.

If a result satisfying \eqref{eqn:partition-label-relax} is not completely binary, a common way to obtain an approximate binary solution satisfying \eqref{eqn:partition-label} is to select the binary function as the nearest vertex in the unit simplex by
the magnitude of the components, i.e.
\begin{equation} \label{eqn:partition-label-binary}
(u_1 (\vect x), \cdots, u_K (\vect x)) \mapsto \vect e_i, \quad {\rm where} \ i = \underset{j}{\rm argmax} \{u_{j} (\vect x)\}_{j=1}^K, \forall \vect x \in V.
\end{equation}
Here, $\vect e_i$ is the $K$-length unit normal vector which is 1 at the $i$-th component and 0 for all other components.

\subsection{Proposed method}
In this section, we present our novel two-stage method for data (e.g. point cloud) classification based on the SaT strategy which has been validated very effective in image segmentation. Our method can be summarized briefly as follows: first, a classification result is obtained as a warm initialization by using a classical, fast, but need
not be very accurate classification method such as SVM \cite{CV95}; then, a proposed two-stage iteration scheme is implemented until no change in the labels of the test points could be made between consecutive iterations. Specifically, at the first stage, we propose to minimize a novel convex model free of constraint (like those in \eqref{eqn:partition}, \eqref{eqn:partition-label} and \eqref{eqn:partition-label-relax}), to obtain a fuzzy partition, say $U$, while keeping the training labels unchanged; at the second stage, a binary result is obtained by just applying the binary rule in \eqref{eqn:partition-label-binary} directly on the fuzzy partition obtained at the first stage.
This binary result could be the final classification result for the original classification problem, or, if necessary, be set as a new initialization to search a better one in the same manner. In the following, we give the details of each step.


{\it Initialization.} Given a point cloud $V$ containing $N$ points in $\mathbb{R}^M$ and training set $T$ containing $N_T$ points with correct labels, we use SVM, which is a standard and fast clustering method as an example, to obtain the first clustering. Let the partition matrix be
$\hat U= (\hat{\vect u}_1, \cdots, \hat{\vect u}_K) \in \mathbb{R}^{N\times K}$, where $\hat{\vect u}_j = (\hat{u}_j(\vect x))_{\vect x\in V}^\top \in \mathbb{R}^N$ for $j = 1, \ldots, K$. One could acquire an initialization by any other
methods which have better performance than SVM. If no proper method is available
(e.g. the data set is too large), then an initialization generated
by setting labels to the test points randomly can be used as an alternative.

{\it Stage one.} Now we put forward our convex model to find a fuzzy partition $U$ with initialization $\hat{U} = (\hat{\vect u}_1, \cdots, \hat{\vect u}_K)$. It is
\begin{align} \label{eqn:model-proposed}
\underset{U}{\argmin} \sum_{j=1}^K\left\{\frac{\beta}{2}\|{\vect u}_j-\hat{\vect u}_j\|_2^2 + \frac{\alpha}{2}{\vect u}_j^{\top} L{\vect u}_j + \|\nabla {\vect u}_j\|_1 \right\},
\end{align}
where the first term is the data fidelity term constraining the fuzzy partition not far away from the initialization; the second term is related to $\|\nabla {\vect u}\|_2^2$ with graph Laplacian $L$; the last term is the total variation constructed on the graph; and $\alpha, \beta > 0$ are regularization parameters. Specifically, the second term in \eqref{eqn:model-proposed} is used to impose smooth features on the labels of the points, and the last term is used to force the points with similar information to group together.

We emphasize that we already have the labels on the points in the training set $T$, with $\bar U= (\bar{\vect u}_1, \cdots, \bar{\vect u}_K) \in \mathbb{R}^{N_T\times K}$ being the partition matrix on $T$, where $\bar{\vect u}_j = (\bar{u}_j(\vect x))_{\vect x\in T}^\top \in \mathbb{R}^{N_T}$ for $j = 1, \ldots, K$. Therefore, we only assign labels to points in the test set $S$, i.e. we have
\begin{equation}
\hat{u}_j(\vect x) = \bar{u}_j(\vect x), \quad  \forall \vect x\in T, j = 1, \ldots, K.
\end{equation}
Let $\hat{\vect u}_{S_j}$ represent the part of $\hat{\vect u}_{j}$ defined on the test set $S$, then we have
\begin{equation} \label{eqn:decom-u-ini}
\hat{\vect u}_{j} = (\hat{\vect u}_{S_j}^\top, \bar{\vect u}_j^\top)^\top, \quad   j = 1, \ldots, K.
\end{equation}
We use analogous notations for the partition matrix $U = ({\vect u}_1, \cdots, {\vect u}_K)$, with
\begin{equation} \label{eqn:decom-u}
{\vect u}_j = ({\vect u}_{S_j}^\top, \bar{\vect u}_j^\top)^\top, j=1, \ldots, K.
\end{equation}
In Section \ref{sec:alg}, \eqref{eqn:decom-u-ini} and \eqref{eqn:decom-u} are going to be used to derive an efficient algorithm to solve \eqref{eqn:model-proposed}.

The following Theorem \ref{thm:unique} proves that our proposed model \eqref{eqn:model-proposed} has a unique solution.
\begin{theorem} \label{thm:unique}
Given $\hat{U}\in \mathbb{R}^{N\times K}$ and $\alpha, \beta > 0$,
the proposed model \eqref{eqn:model-proposed} has a unique solution ${U}\in \mathbb{R}^{N\times K}$.
\end{theorem}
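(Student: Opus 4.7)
The plan is to exploit the fact that the objective in \eqref{eqn:model-proposed} splits as a sum over $j=1,\ldots,K$ with no coupling between the different ${\vect u}_j$'s, since no unit-simplex constraint is imposed. Thus it suffices to show that for each fixed $j$ the sub-functional
\[
F_j({\vect u}_j) := \frac{\beta}{2}\|{\vect u}_j-\hat{\vect u}_j\|_2^2 + \frac{\alpha}{2}{\vect u}_j^{\top} L{\vect u}_j + \|\nabla {\vect u}_j\|_1
\]
admits a unique minimizer over $\mathbb{R}^N$; then the solution of \eqref{eqn:model-proposed} is simply the concatenation.

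Next I would verify convexity term by term. The fidelity term $\tfrac{\beta}{2}\|{\vect u}_j-\hat{\vect u}_j\|_2^2$ is strongly convex with modulus $\beta>0$, since $\beta I \succ 0$. The graph-Laplacian term $\tfrac{\alpha}{2}{\vect u}_j^{\top}L{\vect u}_j$ is convex because $L=D-W$ is symmetric positive semi-definite; this follows immediately from the identity \eqref{eqn:nabla-l2-n}, which expresses it as a non-negative sum of squares. The total variation term $\|\nabla{\vect u}_j\|_1$ is a composition of the linear map $\nabla$ with a norm and is therefore convex. Summing, $F_j$ is the sum of a strongly convex function and two convex functions, hence itself strongly convex (with modulus at least $\beta$).

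With strong convexity in hand, existence and uniqueness follow from standard convex analysis. Strong convexity of $F_j$ together with continuity yields coercivity: $F_j({\vect u}_j)\to\infty$ as $\|{\vect u}_j\|_2\to\infty$, so sublevel sets are bounded. Combined with lower semi-continuity (in fact continuity) of $F_j$, the Weierstrass theorem gives existence of a minimizer, and strict convexity forces it to be unique. Concatenating the unique minimizers for $j=1,\ldots,K$ produces the unique minimizer $U\in\mathbb{R}^{N\times K}$ of \eqref{eqn:model-proposed}.

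There is no real obstacle here; the only substantive point is the positive semi-definiteness of $L$, which is immediate from \eqref{eqn:nabla-l2-n}. The argument critically relies on the absence of any coupling constraint such as \eqref{eqn:partition-label} or \eqref{eqn:partition-label-relax} across the labeling functions, which is precisely the design feature of the proposed model that distinguishes it from the non-convex variational classification models discussed earlier.
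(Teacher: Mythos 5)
Your argument is correct and follows essentially the same route as the paper, whose entire proof is a one-line appeal to the strong convexity of the objective (citing Boyd and Vandenberghe) exactly as you establish it via the $\beta$-strongly convex fidelity term plus the convex Laplacian and total-variation terms. Your version merely fills in the details the paper leaves implicit, namely the decoupling into $K$ independent subproblems, the positive semi-definiteness of $L$ from \eqref{eqn:nabla-l2-n}, and the coercivity/existence step.
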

\begin{proof}
According to \cite[Chapter 9]{Boyd04}, a strongly convex function has a unique minimum. The conclusion follows easily from the strong convexity of model \eqref{eqn:model-proposed}.
\end{proof}

Many algorithms can be used to solve model \eqref{eqn:model-proposed} efficiently due to the convexity of the model without constraint. For example, the split-Bregman algorithm \cite{GO09}, which is specifically devised for $\ell_1$ regularized problems; the primal-dual algorithm \cite{CP11}, which is designed to solve general saddle point problems; and the
powerful alternative, ADMM algorithm \cite{BPCPE11}.
In particular, model \eqref{eqn:model-proposed} actually contains $K$ independent sub-minimization problems,
where each corresponds to a labeling function ${\vect u}_j$, and therefore the parallelism strategy is ideal to apply. This is an important advantage of our method for large data sets.
The algorithm aspects to solve our proposed convex model \eqref{eqn:model-proposed} are detailed in Section \ref{sec:alg}.

{\it Stage two.} This stage is to project the fuzzy partition result $U$ obtained at stage one to a binary partition. Here, formula \eqref{eqn:partition-label-binary} is applied to the fuzzy partition $U$ to generate a binary partition, which naturally satisfies no vacuum and overlap constraint \eqref{eqn:partition}.
We remark that compared to the computation time at stage one, the time at stage two is negligible.

Normally, the classification work is complete after we obtain a binary partition matrix at stage two. However, since the way of obtaining an initialization in our scheme is open,
and the quality of the initialization could be poor, we suggest going back to stage one with the latest obtained partition as a new initialization and repeat the above two stages until no more change in the partition matrix is observed. More precisely, we set $U$ as $\hat{U}$ and repeat Stages 1 and 2 again to obtain a new $U$. Then the final classification result is the converged stationary partition matrix, say $U^*$.
Moreover, to accelerate the convergence speed, we update $\beta$ in \eqref{eqn:model-proposed} by a factor of $2$ if we are to repeat the stages. This will obviously enforce the closeness between two consecutive clustering
results during iterations, which will ensure the algorithm converges fast.
We stop the algorithm when no changes are observed in the clustering result compared to the previous one.
We remark that a few iterations ($\approx$ 10) are generally enough in practice, see the experimental results in Section \ref{sec:numerics} for more detail.

Note, importantly, that our classification method here is totally different from other variational methods like \cite{LET12,YT18} which need to minimize
variational models with constraint like \eqref{eqn:partition}, \eqref{eqn:partition-label}, \eqref{eqn:partition-label-relax}, or other kinds of constraints
(e.g. minimum and maximum number of points imposed on individual classes $V_i$).
Even though our proposed model \eqref{eqn:model-proposed} has no constraint,
the final classification result of our method naturally satisfies no vacuum and overlap constraint \eqref{eqn:partition}. Therefore, our method is much easier to solve for each iteration. Our proposed method, namely SaT
(smoothing and thresholding) method for high-dimensional data classification,
is summarized in Algorithm \ref{alg:pcs}.
\begin{algorithm}[h]
 \caption{SaT method for high-dimensional data classification}
 \label{alg:pcs}

{\bf Initialization:} Generate initialization $\hat{U}$ by e.g. SVM method. \\
{\bf Output:} Binary partition $U^*$.

\vspace{0.05in}

{\bf For} $l = 0,1,\ldots,$ until the stopping criterion reached (e.g. $\|U^{(l)} - U^{(l+1)}\| = 0$)

\hspace*{0.2in}{\it Stage one:}	Compute fuzzy partition ${U}$ by solving model 	\eqref{eqn:model-proposed}.

\hspace*{0.2in}{\it Stage two:} Compute binary partition ${U}^{(l+1)}$ by using formula \eqref{eqn:partition-label-binary} on ${U}$.

\hspace*{0.17in} Set $\hat{U} = U^{(l+1)}$ and $\beta = 2\beta$.

{\bf Endfor} \\
Set $U^* = U^{(l+1)}$.

\end{algorithm}

\section{Algorithm aspects} \label{sec:alg}
In this section, we present an algorithm to solve the proposed convex model \eqref{eqn:model-proposed} based on the primal-dual algorithm \cite{CP11}
which is briefly recalled below.

\subsection{Primal-dual algorithm}
Let ${X}_i$ be a finite dimensional vector space  {equipped} with a proper inner product $\langle\cdot,\cdot\rangle_{X_i}$ and norm $\|\cdot\|_{X_i}$, $i = 1, 2$. Let map ${\cal K}:X_1\rightarrow X_2$ be a bounded linear operator.
The primal-dual algorithm is, generally speaking, to solve the following saddle-point problem
\begin{equation} \label{eqn:cp-problem}
\min_{{\vect x}\in X_1}\max_{\tilde{\vect x} \in X_2}
\Big\{
\langle {\cal K}{\vect x}, \tilde{\vect x}\rangle + {\cal G}(\vect x) - {\cal F}^*(\tilde{\vect x})
\Big\},
\end{equation}
where ${\cal G}:X_1\rightarrow[0,+\infty]$, ${\cal F}:X_2\rightarrow[0,+\infty]$ are proper, convex,
lower-semicontinuous functions, and ${\cal F}^*$ represents the convex conjugate of ${\cal F}$.
Given proper initializations, the primal-dual algorithm to solve \eqref{eqn:cp-problem} can be summarized in the following  iterative way of updating the primal and dual variables
\begin{align}
\tilde{\vect x}^{(l+1)} & =(I+\sigma\partial {\cal F}^*)^{-1}(\tilde{\vect x}^{(l)}+\sigma {\cal K} {\vect z}^{(l)}), \\
{\vect x}^{(l+1)} & = (I+\tau\partial {\cal G})^{-1}({\vect x}^{(l)}-\tau {\cal K}^*\tilde{\vect x}^{(l+1)}), \\
{\vect z}^{(l+1)} & = {\vect x}^{(l+1)}+\theta({\vect x}^{(l+1)} - {\vect x}^{(l)}),
\end{align}
where $\theta\in[0,1], \tau, \sigma>0$ are algorithm parameters.

\subsection{Algorithm to solve our proposed model}
We first define some useful notations which will be used to present our algorithm.

\subsubsection{Preliminary}
For ease of explanation, in the following, when we say $(i, j) \in E$, the $i$ and $j$ represent the $i$-th and $j$-th vertices in $E$,
respectively. Let
\begin{equation}
E^\prime = \big\{(i, j) \ | \ i < j, \forall (i, j) \in E \big\}.
\end{equation}
The graph laplacian $L = D - W \in \mathbb{R}^{N\times N}$ can be decomposed as
\begin{equation}\label{decompL}
L = \sum_{(i, j) \in E^\prime}L_{ij},
\end{equation}
where
\begin{equation}
 L_{ij}=\bordermatrix{ & & i & & j &\cr
 & & \vdots & & \vdots &  \cr
i & \cdots&w(i,j)&\cdots&-w(i,j)&\cdots\cr
 & &\vdots& &\vdots& \cr
j &\cdots&-w(i,j)&\cdots&w(i,j)&\cdots\cr
 & &\vdots& &\vdots& }
 \in \mathbb{R}^{N\times N}
\end{equation}
is a matrix with only four nonzero entries which locate at positions $(i, i), (i, j), (j, i)$ and $(j, j)$.
Let $E^\prime = E^\prime_a \cup E^\prime_b \cup E^\prime_c$,
where
\begin{align}
E^\prime_a &=  \big\{(i, j) \ | \ i, j\in S, \forall (i, j) \in E^\prime \big\}, \\
E^\prime_b &= \big\{(i, j) \ | \ i, j\in T, \forall (i, j) \in E^\prime \big\}, \\
E^\prime_c &= E^\prime \setminus (E^\prime_a \cup E^\prime_b).
\end{align}
Then the decomposition $L$ in \eqref{decompL} can be rewritten as
\begin{equation}\label{eqn:L_s}
L = \sum_{(i,j)\in E^\prime_a}L_{ij}+\sum_{(i,j)\in E^\prime_b}L_{ij}+\sum_{(i,j)\in E^\prime_c}L_{ij}.
\end{equation}
Note that, the terms $\sum_{(i,j)\in E^\prime_a}L_{ij}$ and $\sum_{(i,j)\in E^\prime_b}L_{ij}$ only have nonzero entries which are associated to the test set $S$
and the training set $T$, respectively. Let
\begin{equation}\label{eqn:L_S}
\sum_{(i,j)\in E^\prime_a}L_{ij} =
\begin{pmatrix}
L_S & 0 \\
\\
0 & 0
\end{pmatrix},
\sum_{(i,j)\in E^\prime_b}L_{ij} =
\begin{pmatrix}
0 & 0 \\
\\
0 & \bar L
\end{pmatrix},
\sum_{(i,j)\in E^\prime_c}L_{ij} =
\begin{pmatrix}
L_1 & L_3\\
\\
L_3^\top & L_2
\end{pmatrix},
\end{equation}
where $L_S, L_1 \in \mathbb{R}^{(N-N_T)\times (N-N_T)}$ are related to the test set $S$, $\bar{L}, L_2 \in \mathbb{R}^{N_T\times N_T}$ are related to the training set $T$, and $L_3 \in \mathbb{R}^{(N-N_T)\times N_T}$.
Then we have
\begin{equation} \label{eqn:L-decom}
{L} =
\begin{pmatrix}
{L}_{S}+L_1 & L_3 \\
\\
L_3^\top & \bar{L}+L_2
\end{pmatrix}.
\end{equation}

According to \eqref{eqn:nabla-n}, the gradient operator $\nabla$ can be regarded as a linear transformation between $\mathbb{R}^N$ and $\mathbb{R}^{N\times(k-1)}$
(where $k = |{\cal N}(\vect x)|$). For a vector ${\vect u}_j = ({\vect u}_{S_j}^\top, \bar{\vect u}_j^\top)^\top$ defined in \eqref{eqn:decom-u}, let
\begin{equation}
{\cal A}_S({\vect u}_{S_j}) =
\nabla
\begin{pmatrix}
{\vect u}_{S_j} \\
{\bf 0}
\end{pmatrix}
\in \mathbb{R}^{N\times(k-1)}, \quad
H_j =
\nabla
\begin{pmatrix}
{\bf 0} \\
\bar{\vect u}_j
\end{pmatrix}
\in \mathbb{R}^{N\times(k-1)}.
\label{eqn:def_A}
\end{equation}
Clearly, ${\cal A}_S: \mathbb{R}^{N-N_T}\rightarrow\mathbb{R}^{N\times(k-1)}$ is an operator corresponding to the test set $S$, and $H_j$ is the gradient matrix corresponding to the training set $T$ which is fixed since $\bar{\vect u}_j$ is fixed. Then, we have
\begin{equation} \label{eqn:gradient-decom}
\nabla {\vect u}_j = \nabla
\begin{pmatrix}
{\vect u}_{S_j} \\
\bar{\vect u}_j
\end{pmatrix}
=
\nabla
\begin{pmatrix}
{\vect u}_{S_j} \\
{\bf 0}
\end{pmatrix}
+
\nabla
\begin{pmatrix}
{\bf 0} \\
\bar{\vect u}_j
\end{pmatrix}
= {\cal A}_S({\vect u}_{S_j}) + H_j.
\end{equation}

\subsubsection{Algorithm}
Substituting the decomposition of $L$ in \eqref{eqn:L-decom} and $\nabla$ in \eqref{eqn:gradient-decom}, $\hat{\vect u}_j$ in \eqref{eqn:decom-u-ini} and ${\vect u}_j$ in \eqref{eqn:decom-u} into
the proposed minimization model \eqref{eqn:model-proposed} yields
\begin{equation}\label{eqn:model-proposed-rewrite}
\underset{\{{\vect u}_{S_j} \}_{j=1}^K}{\argmin}
\sum_{j=1}^K
\left\{ \frac{\beta}{2}\|\hat{\vect u}_{S_j}-{\vect u}_{S_j}\|_2^2 + \frac{\alpha}{2}{\vect u}_{S_j}^{\top}{L}_S {\vect u}_{S_j}+\alpha {\vect u}_{S_j}^{\top}{L}_3 \bar{\vect u}_{j} + \|{\cal A}_S({\vect u}_{S_j}) + H_j\|_1 \right\}.
\end{equation}
Note, obviously, that solving the above model \eqref{eqn:model-proposed-rewrite} is equivalent to solving $K$ sub-minimization problems corresponding to each ${\vect u}_{S_j}$, $j = 1, \ldots, K$, which means that our proposed model inherently benefits from the parallelism computation.

For $1\leq j\leq K$, let
\begin{align}
{\cal G}_j({\vect u}_{S_j}) & = \frac{\beta}{2}\|\hat{\vect u}_{S_j}-\vect {\vect u}_{S_j}\|_2^2 + \frac{\alpha}{2}\vect {\vect u}_{S_j}^{\top}{L}_S \vect {\vect u}_{S_j}+\alpha \vect {\vect u}_{S_j}^{\top}{L}_3 \bar{\vect u}_{j}, \label{eqn:G} \\
{\cal F}_j(\tilde{\vect x}) & =\|\tilde{\vect x} + H_{j}\|_1. \label{eqn:F}
\end{align}
Using the definition of the $\ell_1$-norm given in \eqref{eqn:nabla-l1-n},
the conjugate of ${\cal F}_j$, ${\cal F}_j^*$, can then be calculated as
\begin{align} \label{eqn:F-con}
\begin{split}
{\cal F}_j^*(\vect p)&=\sup_{\tilde{\vect x} \in \mathbb{R}^{N\times(k-1)}}\ \langle \tilde{\vect x},\vect p\rangle - \|\tilde{\vect x} + H_j\|_1\\
&= -\langle \vect p, H_j \rangle + \chi_P(\vect p),
\end{split}
\end{align}
where $P=\{\vect p\in \mathbb{R}^{N\times(k-1)}: \|\vect p\|_{\infty} \le 1\}$, and $\chi_P(\vect p)$ is the characteristic function of set $P$ with value 0 if
${\vect p} \in P$, otherwise $+\infty$.

Using the primal-dual formulation in \eqref{eqn:cp-problem} with the definitions of ${\cal G}_j$ and ${\cal F}_j^*$ respectively given in \eqref{eqn:G} and \eqref{eqn:F-con}, then the minimization problem \eqref{eqn:model-proposed-rewrite} corresponding to each ${\vect u}_{S_j}$ can be reformulated as
\begin{equation} \label{eqn:model-proposed-rewrite-pd}
\underset{{\vect u}_{S_j}}\argmin\max_{\vect p} \Big\{ \langle {\cal A}_S({\vect u}_{S_j}), \vect p\rangle + {\cal G}_j({\vect u}_{S})+\langle \vect p, \vect h_j \rangle -\chi_P(\vect p) \Big\}.
\end{equation}
To apply the primal-dual method, it remains to compute $(I+\sigma\partial {\cal F}_j^*)^{-1}$ and \mbox{$(I+\tau\partial {\cal G}_j)^{-1}$}.
Firstly, for $\forall \tilde{\vect x} \in \mathbb{R}^{N\times(k-1)}$, we have
\begin{align} \label{eqn:F-sol}
\begin{split}
(I+\sigma\partial {\cal F}^*)^{-1}(\tilde{\vect x})
  &= \underset{\vect p \in \mathbb{R}^{N\times(k-1)}}{\argmin}\ {\cal F}_j^*(\vect p) + \frac{1}{2\sigma}\|\vect p - \tilde{\vect x}\|_2^2\\
  &= \underset{\vect p \in \mathbb{R}^{N\times(k-1)}}{\argmin}\ \chi_P(\vect p)+\frac{1}{2\sigma}\|\vect p-\tilde{\vect x}\|_2^2-\langle\vect p, H_j \rangle\\
  &= \underset{\vect p \in \mathbb{R}^{N\times(k-1)}}{\argmin}\ \chi_P(\vect p)+\frac{1}{2\sigma}\|\vect p-\tilde{\vect x}-\sigma H_j\|_2^2\\
  &= {\iota}_P(\tilde{\vect x}+\sigma H_j),
\end{split}
\end{align}
where the operator ${\iota}_P(\cdot)$ is the pointwise projection operator onto the set $P$, i.e., $\forall p \in \mathbb{R}$,
\begin{equation}
{\iota}_P(p) =
\begin{cases}
1, & |p| > 1 \\
p, & {\rm otherwise}.
\end{cases}
\end{equation}
Secondly, for $\forall {\vect x} \in \mathbb{R}^{N-N_{T}}$, we have
\begin{align} \label{eqn:G-sol}
(I+\tau\partial {\cal G}_j)^{-1}({\vect x})
  = \underset{{\vect u}_{S_j} \in \mathbb{R}^{N-N_{T}} }{\argmin}\ {\cal G}_j({\vect u}_{S_j}) + \frac{1}{2\tau}\|{\vect u}_{S_j} - {\vect x}\|_2^2.
\end{align}
Using the definition of ${\cal G}_j({\vect u}_{S_j})$ given in \eqref{eqn:G}, problem \eqref{eqn:G-sol} becomes solving the following linear system
\begin{equation}
(\alpha L_S+\beta I+\frac{1}{\tau}I) {\vect u}_{S_j} = \beta \hat{\vect u}_{S_j}+\frac{1}{\tau} {\vect x} - \alpha L_3 \bar{\vect u}_j.
\end{equation}
Since $(\alpha \bar L+\beta I+\frac{1}{\tau}I)$ is positive definite, the above linear system can be solved efficiently by e.g. conjugate gradient method \cite{CN96}.

Finally, by exploiting the strong convexity of ${\cal G}_j,\forall 1\leq j\leq K$, which is shown in the next lemma, \cite{CP11} suggests that we could adaptively modify $\sigma,\tau$ to accelerate the convergence or the primal-dual method.
\begin{Lemma}
	The functions ${\cal G}_j,\forall 1\leq j\leq K$ are strongly convex with parameter $\beta$.
\end{Lemma}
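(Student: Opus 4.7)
The plan is to show that the Hessian of ${\cal G}_j$ (with respect to $\vect u_{S_j}$) satisfies $\nabla^2 {\cal G}_j \succeq \beta I$ in the Loewner order, which is equivalent, by standard convex analysis (see e.g.\ \cite{Boyd04}), to strong convexity of ${\cal G}_j$ with parameter $\beta$.

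First, I would compute the Hessian term by term from the definition \eqref{eqn:G}. The data-fidelity term $\frac{\beta}{2}\|\hat{\vect u}_{S_j}-\vect u_{S_j}\|_2^2$ contributes a Hessian of $\beta I$. The Dirichlet-style quadratic term $\frac{\alpha}{2}\vect u_{S_j}^\top L_S \vect u_{S_j}$, using that $L_S$ is symmetric, contributes $\alpha L_S$. The cross term $\alpha \vect u_{S_j}^\top L_3 \bar{\vect u}_j$ is linear in $\vect u_{S_j}$ (since $\bar{\vect u}_j$ is fixed) and contributes nothing to the Hessian. Summing gives $\nabla^2 {\cal G}_j = \beta I + \alpha L_S$.

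The only real content is then to verify that $L_S \succeq 0$, so that adding $\alpha L_S$ preserves the lower bound $\beta I$. For this I would use the decomposition \eqref{eqn:L_s}--\eqref{eqn:L_S}. Each edge matrix $L_{ij}$ admits the factorization $L_{ij} = w(i,j)\,(e_i - e_j)(e_i - e_j)^\top$ with $w(i,j)\ge 0$, hence is rank-one positive semi-definite. Therefore $\sum_{(i,j)\in E'_a} L_{ij}$ is positive semi-definite as a sum of PSD matrices. By \eqref{eqn:L_S}, this sum has the block form $\begin{pmatrix} L_S & 0\\ 0 & 0\end{pmatrix}$, and a block-diagonal matrix of this shape is PSD if and only if $L_S$ is PSD. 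Hence $L_S \succeq 0$.

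Combining the two steps, $\nabla^2 {\cal G}_j = \beta I + \alpha L_S \succeq \beta I$, which yields the claim. I do not foresee a genuine obstacle: the argument is essentially a Hessian computation plus the PSD-ness of a weighted graph Laplacian restricted to the test-set block. The only care needed is to correctly identify $L_S$ as coming from the purely test-set edge set $E'_a$ (the cross-set contributions were split off into $L_3$ and $L_1$ in \eqref{eqn:L-decom}), so that the single-edge PSD decomposition applies cleanly.
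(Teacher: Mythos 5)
Your proof is correct and follows essentially the same route as the paper: the paper likewise observes that the quadratic-plus-linear part built from $L_S$ and $L_3$ is convex (asserting $L_S\succeq 0$ from \eqref{eqn:L_S}) and that the fidelity term $\frac{\beta}{2}\|\vect u_{S_j}-\hat{\vect u}_{S_j}\|_2^2$ supplies the $\beta$-strong convexity. The only difference is cosmetic: you phrase the argument via the Hessian and explicitly justify $L_S\succeq 0$ through the rank-one factorization $L_{ij}=w(i,j)(\vect e_i-\vect e_j)(\vect e_i-\vect e_j)^\top$, a detail the paper leaves implicit.
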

\begin{proof}
	For simplicity, we omit the subscript $j$ and $S_j$ in the following proof. First, by \eqref{eqn:L_S}, $L_S$ is semi-positive definite. Therefore, $(\frac{\alpha}{2}\vect {\vect u}^{\top}{L}_S \vect {\vect u}+\alpha \vect {\vect u}^{\top}{L}_3 \bar{\vect u})$ is convex.	Now the strong convexity of $\cal G$ follows from the fact that the remaining term in \eqref{eqn:G}, which is $\frac{\beta}{2}\|\vect u-\hat{\vect u}\|_2^2$, is strongly convex with parameter $\beta$.
\end{proof}

The algorithm solving our proposed classification model \eqref{eqn:model-proposed-rewrite} (i.e. model \eqref{eqn:model-proposed}) is summarized in Algorithm \ref{alg:t-rof}, and its convergence proof is given in Theorem \ref{thm:conv} below. For each sub-minimization problem, the relative error between two consecutive iterations and/or a given maximum iteration number can be used as stopping criteria to terminate the algorithm. Finally, we emphasize again that our method is quite suitable for parallelism since the $K$ sub-minimization problems are independent with each other and therefore can be computed in parallel.

\begin{algorithm}[h]
 \caption{Algorithm solving the proposed model \eqref{eqn:model-proposed-rewrite} (i.e. model \eqref{eqn:model-proposed}) }
 \label{alg:t-rof}

{\bf Initialization:} $\tilde{\vect x}^{(0)} \in \mathbb{R}^{N\times(k-1)}$, ${\vect x}^{(0)}, {\vect z}^{(0)} \in \mathbb{R}^{N-N_{T}}$, $\theta\in[0,1], \tau^{(0)}, \sigma^{(0)}>0$. \\
{\bf Output:} $\{{\vect u}_{S_j}\}_{j=1}^K$.
\vspace{0.05in}

{\bf For} $j=1,\cdots, K$ (parallelism strategy can be applied)

\hspace*{0.2in} {\bf For} $l=0, 1,\ldots,$ until the stopping criterion reached \\
\hspace*{0.4in} Let $\tilde{\vect x} = \tilde{\vect x}^{(l)}+\sigma^{(l)} {\cal A}_S {\vect z}^{(l)}$, and compute $\tilde{\vect x}^{(l+1)}  =(I+\sigma^{(l)}\partial {\cal F}^*)^{-1}(\tilde{\vect x})$ by \eqref{eqn:F-sol};

\hspace*{0.4in} Let ${\vect x} = {\vect x}^{(l)}-\tau^{(l)} {\cal A}_S^*\tilde{\vect x}^{(l+1)}$, and compute ${\vect x}^{(l+1)}  = (I+\tau^{(l)}\partial {\cal G})^{-1}({\vect x})$ by \eqref{eqn:G-sol};

\hspace*{0.4in} Let $\theta^{(l)}=1/\sqrt{1+\beta\tau^{(l)}}$, and set $\tau^{(l+1)}=\theta^{(l)}\tau^{(l)},\sigma^{(l+1)}=\sigma^{(l)}/\theta^{(l)}$;

\hspace*{0.4in} Compute ${\vect z}^{(l+1)}  = {\vect x}^{(l+1)}+\theta({\vect x}^{(l+1)} - {\vect x}^{(l)})$;

\hspace*{0.2in} {\bf Endfor}\\
\hspace*{0.2in} Set ${\vect u}_{S_j} = {\vect x}^{(l+1)}$. \\
{\bf Endfor}

\end{algorithm}

\begin{theorem} \label{thm:conv}
{Algorithm 2} converges if $\tau^{(0)}\sigma^{(0)}<\frac{1}{N^2(k-1)}$.
\end{theorem}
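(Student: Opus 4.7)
The plan is to reduce the statement to the convergence theorem of Chambolle--Pock \cite{CP11} for the accelerated primal--dual iteration in the presence of strong convexity. Indeed, by construction Algorithm~\ref{alg:t-rof} is exactly the scheme obtained from \eqref{eqn:cp-problem} with ${\cal K}={\cal A}_S$, ${\cal G}={\cal G}_j$ and ${\cal F}^*={\cal F}_j^*$, where the step-size update $\theta^{(l)}=1/\sqrt{1+\beta\tau^{(l)}}$, $\tau^{(l+1)}=\theta^{(l)}\tau^{(l)}$, $\sigma^{(l+1)}=\sigma^{(l)}/\theta^{(l)}$ matches the adaptive scheme in \cite{CP11} tailored to the case when the primal objective is uniformly convex with modulus $\beta$. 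Since the $K$ sub-problems are uncoupled, it suffices to establish convergence for a single fixed $j$.

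I would then verify the hypotheses of the cited convergence result one by one. First, ${\cal F}_j^*$ in \eqref{eqn:F-con} is proper, convex and lower-semicontinuous as the sum of a linear functional and the indicator of the convex compact set $P$. Second, ${\cal A}_S$ is a bounded linear operator as a restriction of the finite-dimensional map $\nabla$ in \eqref{eqn:def_A}. Third, the preceding Lemma already gives the strong convexity of ${\cal G}_j$ with modulus exactly $\beta$, which is the modulus plugged into the acceleration rule $\theta^{(l)}=1/\sqrt{1+\beta\tau^{(l)}}$. With these three ingredients in place, the cited convergence theorem guarantees convergence of the iterates $({\vect x}^{(l)},\tilde{\vect x}^{(l)})$ to the unique saddle point of \eqref{eqn:model-proposed-rewrite-pd}, provided the step-size compatibility condition $\tau^{(0)}\sigma^{(0)}\|{\cal A}_S\|^2\le 1$ holds.

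The remaining, and main, task is to bound $\|{\cal A}_S\|^2$ by $N^2(k-1)$, which is where the factor in the statement comes from. Writing ${\cal A}_S$ as an $N(k-1)\times(N-N_T)$ matrix according to \eqref{eqn:def_A}, each of its rows corresponds to a pair $(\vect x,\vect y)$ with $\vect y\in{\cal N}(\vect x)$ and contains at most two nonzero entries, both of absolute value $w(\vect x,\vect y)\le 1$ for the standard weights \eqref{eqn:radial_basis}--\eqref{eqn:cos_weight}. Using $\|{\cal A}_S\|_{op}\le\|{\cal A}_S\|_F$ and the crude Frobenius bound
\begin{equation*}
\|{\cal A}_S\|_F^2\le (\text{\# rows})\cdot(\text{\# cols})\cdot\max_{i,j}|({\cal A}_S)_{ij}|^2\le N(k-1)\cdot(N-N_T)\cdot 1\le N^2(k-1),
\end{equation*}
the hypothesis $\tau^{(0)}\sigma^{(0)}<1/[N^2(k-1)]$ forces $\tau^{(0)}\sigma^{(0)}\|{\cal A}_S\|^2<1$, so the convergence hypothesis of \cite{CP11} is satisfied and the conclusion follows.

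The only delicate step in the above program is the operator-norm estimate: one must be careful to index the matrix representation of ${\cal A}_S$ correctly and to use that only columns corresponding to the test set $S$ contribute (the training columns having been absorbed into the fixed matrix $H_j$ via \eqref{eqn:gradient-decom}). Everything else is a direct invocation of the results and quantities already established in the excerpt.
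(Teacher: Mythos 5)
Your proposal is correct and follows essentially the same route as the paper: both reduce the claim to the step-size condition $\tau^{(0)}\sigma^{(0)}\|{\cal A}_S\|_2^2<1$ from the accelerated Chambolle--Pock theorem and then bound $\|{\cal A}_S\|_2^2$ by $N^2(k-1)$ using the entrywise bound $|({\cal A}_S)_{ij}|\le 1$. The only (immaterial) difference is that you pass through the Frobenius norm, $\|{\cal A}_S\|_2\le\|{\cal A}_S\|_F$, whereas the paper uses $\|{\cal A}_S\|_2\le\sqrt{\|{\cal A}_S\|_1\|{\cal A}_S\|_\infty}$; both give the identical final estimate.
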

\begin{proof}
By Theorem 2 in \cite{CP11}, {Algorithm 2} converges as long as $\|{\cal A}_S\|_2^2<\frac{1}{\tau^{(0)}\sigma^{(0)}}$. Therefore it suffices to find a suitable upper bound for $\|{\cal A}_S\|_2$. By our implementation in \eqref{eqn:def_A} and since the weight functions (\cref{eqn:radial_basis,eqn:z-m-p_weight,eqn:cos_weight}) take value between $[-1,1]$, each entry in ${\cal A}_S$ is between $[-1,1]$. Therefore, the 1-norm and $\infty$-norm of ${\cal A}_S$ can be easily estimated as
$$\|{\cal A}_S\|_1=\max_{1\leq j\leq N-N_T}\sum_{i=1}^{N(k-1)}|({\cal A}_S)_{ij}|\leq N(k-1)$$
and
$$\|{\cal A}_S\|_\infty=\max_{1\leq i\leq N(k-1)}\sum_{j=1}^{N-N_T}|({\cal A}_S)_{ij}|\leq N-N_T.$$
Now, we have
$$\|{\cal A}_S\|_2\leq \sqrt{\|{\cal A}_S\|_1\|{\cal A}_S\|_\infty}\leq N\sqrt{k-1}.$$
Therefore, we conclude that, the algorithm converges as long as we choose $\tau^{(0)},\sigma^{(0)} > 0$, such that $\tau^{(0)}\sigma^{(0)}<\frac{1}{N^2(k-1)}.$
\end{proof}

\section{Numerical results}\label{sec:numerics}
In this section, we evaluate the performance of our proposed method on four benchmark data sets---including \textsc{Three Moon, COIL, Opt-Digits} and \textsc{MINST}---for semi-supervised learning. \textsc{Three Moon} is a synthetic data set which has been used frequently e.g. \cite{GMBFP14,LET12,YT18}. The \textsc{COIL}, \textsc{Opt-Digits}, and \textsc{MNIST} data set can be found from the supplementary material of \cite{benchmarkbook}, the UCI machine learning repository\footnote{http://archive.ics.uci.edu/ml/datasets.html}, and the MNIST Database
of Handwritten Digits\footnote{http://yann.lecun.com/exdb/mnist/},
respectively. The basic properties of these test data sets are shown in Table \ref{dataInfo}.

\begin{table}[htbp]
\caption{Basic properties of the test benchmark data sets. ``Dimension" means the length of every vector representing individual points in the given data sets.}
\begin{center}
\begin{tabular}{|c c c c|}
\hline
Data set & Number of classes & Dimension & Number of points\\
\hline
\textsc{Three Moon} & 3 & 100 & 1500\\
\textsc{COIL} & 6 & 241 & 1500 \\
\textsc{Opt-Digits} & 10 & 64 & 5620 \\
\textsc{MNIST} & 10 & 784 & 70000 \\
\hline
\end{tabular}
\label{dataInfo}
\end{center}
\end{table}

To implement our method, $k$-NN graphs are constructed for the test data sets, using the randomized kd-tree \cite{SH08} to find the nearest neighbors with Euclidean distance as the metric. The radial basis function \eqref{eqn:radial_basis} is used to compute the weight matrix $W$, except for the \textsc{MNIST} data set where the Zelnic-Manor and Perona weight function \eqref{eqn:z-m-p_weight} is used with eight closed neighbors.
The training samples $T$---samples with labels known---are selected randomly from each test data set. The classification accuracy is defined as the percentage of correctly labeled data points.

For our proposed method, the regularization parameter $\beta$ is fixed to $10^{-4}$ for \textsc{MNIST}, $10^{-5}$ for \textsc{COIL}, and $10^{-2}$ for \textsc{Three Moon, Opt-Digits}. In practice, one could choose the value of $\beta$ based on the accuracy of initialization. The better the initialized accuracy, the larger $\beta$ we could choose. The regularization parameter $\alpha$ is set to 1 for \textsc{Three Moon, Opt-Digits}, 0.4 for \textsc{MNIST}, and $10^{-2}$ for \textsc{COIL}. The accuracy of the proposed method can be improved further after fine-tuning the values of $\alpha$ and $\beta$ for individual test data sets.
All the codes were run on a MacBook with 2.8 GHz processor and 16 GB RAM, and {\sc Matlab} 2017a.


\subsection{Methods comparison}\label{sec:svm}
As mentioned in previous sections, we use SVM method \cite{CV95} to generate initializations for our proposed method. If it is not proper for a data set
(e.g. very slow due to the large size of the data set), we could just use an initialization generated by assigning clustering labels randomly.

The SVM is a technique aiming to find the best hyperplane that separates data points of one class from the others.
In practice, data may not be separable by a hyperplane. In that case, soft margin is used so that the hyperplane would
separate many data points if not all. It is also common to kernelize data points, and then find separating hyperplane in the transformed space.
The SVM method used in our experiments is trained with linear kernel.

We compare our proposed method with the state-of-the-art methods proposed recently, e.g.
CVM \cite{BM17}, GL \cite{GMBFP14}, MBO \cite{GMBFP14}, TVRF \cite{YT18}, LapRF \cite{YT18}, LapRLS \cite{SB11}, MP \cite{SB11}, and SQ-Loss-I \cite{benchmarkbook}.
The code TVRF was provided by the authors and the parameters used
in it were chosen by trial and error to give the best results.
%
The classification accuracies of methods GL, MBO, LapRF, LapRLS, MP and SQ-Loss-I were taken from \cite{BM17,YT18},
in which methods CVM and TVRF were shown to be superior in most of the cases.

\subsection{Three moon data}
The synthetic {\sc Three Moon} data used here is constructed by following the way performed in \cite{BM17,YT18} exactly. We briefly repeat the procedure as follows. First, generate three half circles in $\mathbb{R}^2$---two half top unit circles and one half bottom circle with radius 1.5 which are centered at (0, 0), (3, 0) and (1.5, 0.4), respectively. Then 500 points are uniformly sampled from each half circle and embedded into $\mathbb{R}^{100}$ by appending zeros to the remaining dimensions. Finally, an i.i.d Gaussian noise with standard deviation 0.14 is added to each dimension of the data. An illustration of the first two dimensions of the {\sc Three Moon} data is shown in Fig. \ref{fig:threemoon} (a) where different colors are applied on each half circle. This is a three-class classification problem with the goal of classifying each half circle using a small number of supervised
points from each class. This classification problem is challenging due to the noise and the high dimensionality of all the points with high similarity in $\mathbb{R}^{98}$.

\begin{figure}[htbp]
\begin{center}
\begin{tabular}{c}
\includegraphics[trim={{.2\linewidth} {.2\linewidth} {.2\linewidth} {.0\linewidth}}, clip, width=0.5\linewidth, height = 0.4\linewidth]{./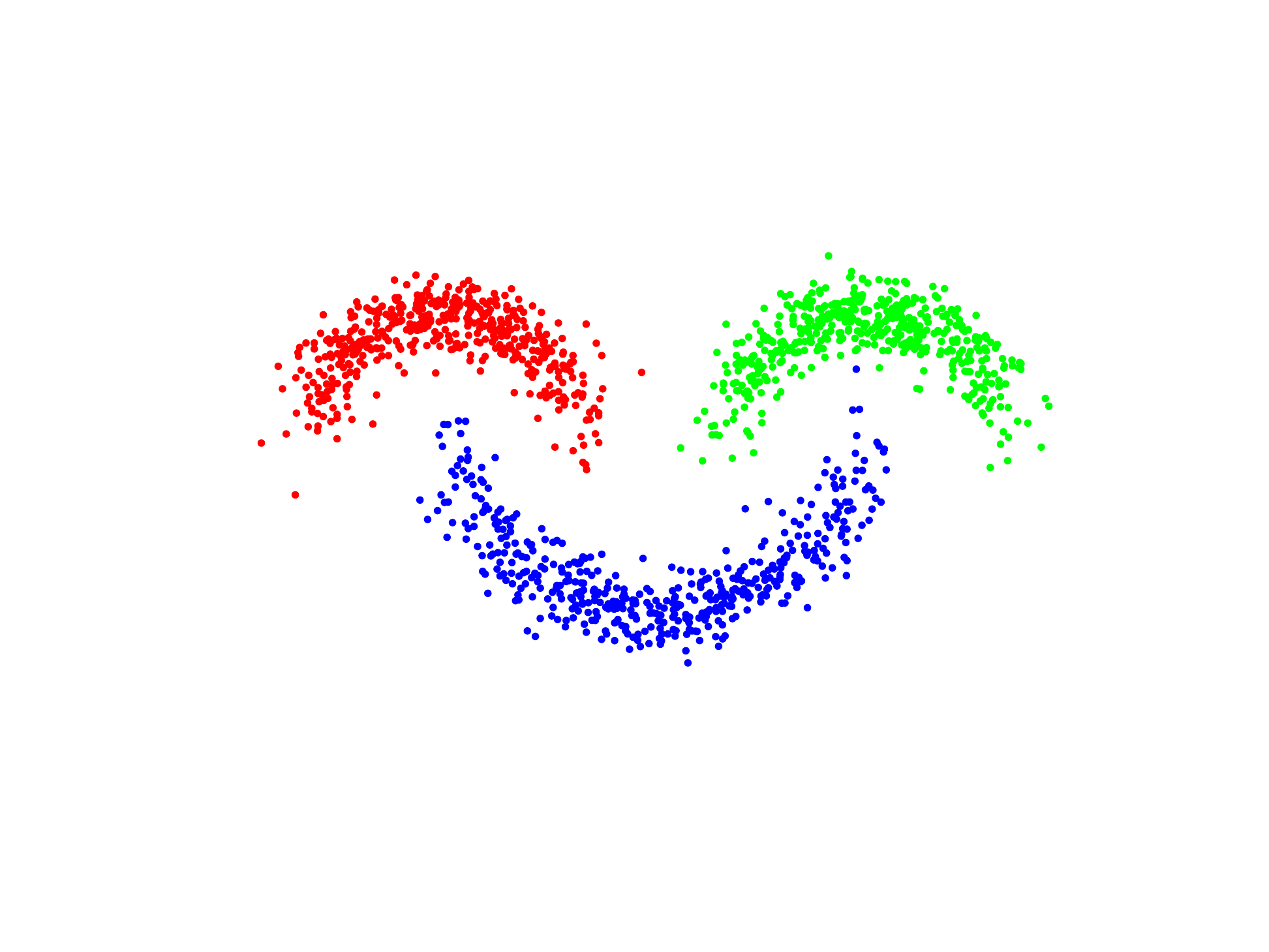} \\
(a) Ground truth
\end{tabular}
\begin{tabular}{cc}
\includegraphics[trim={{.2\linewidth} {.2\linewidth} {.2\linewidth} {.0\linewidth}}, clip, width=0.5\linewidth, height = 0.4\linewidth]{./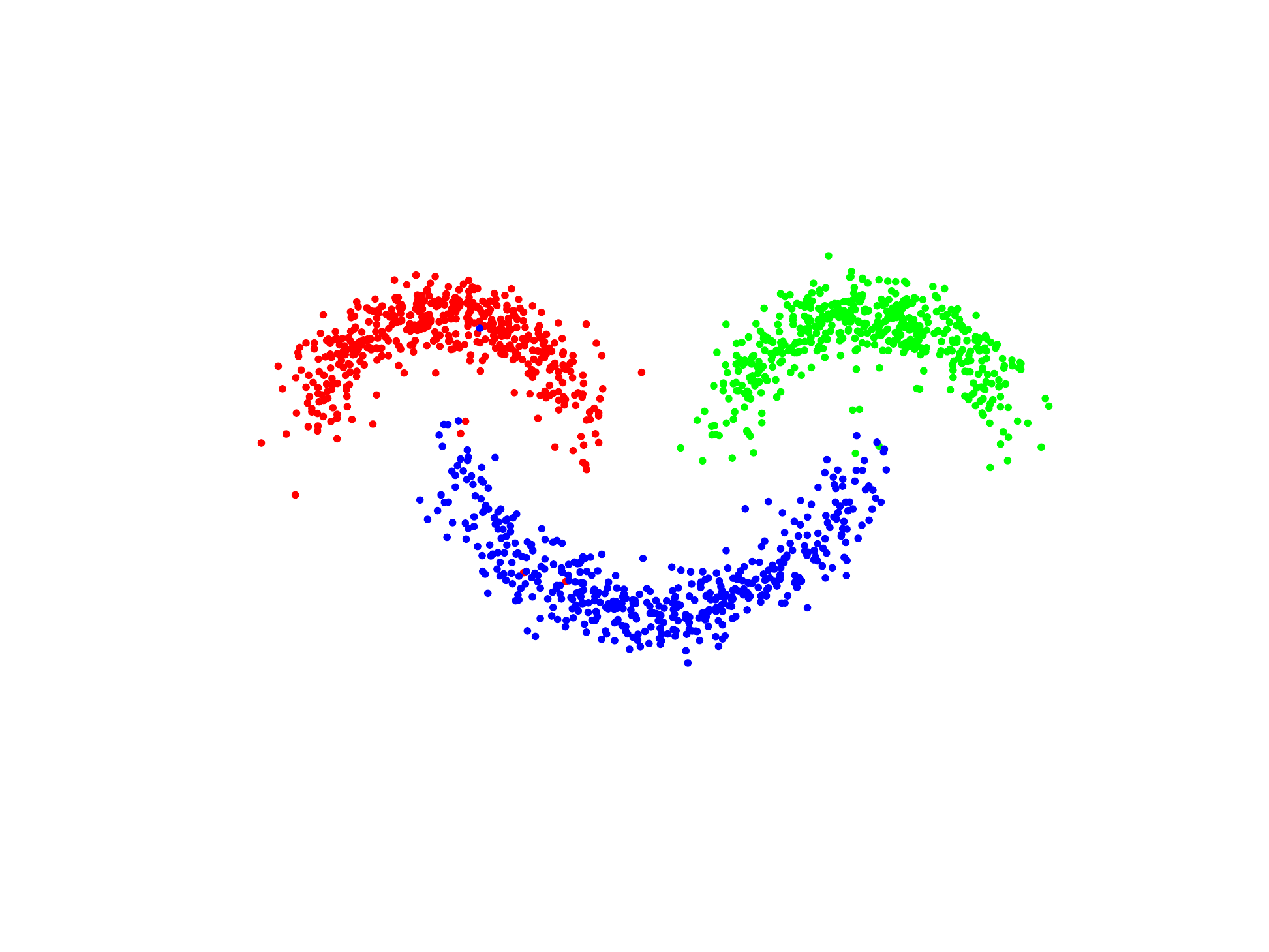} &
\includegraphics[trim={{.2\linewidth} {.2\linewidth} {.2\linewidth} {.0\linewidth}}, clip, width=0.5\linewidth, height = 0.4\linewidth]{./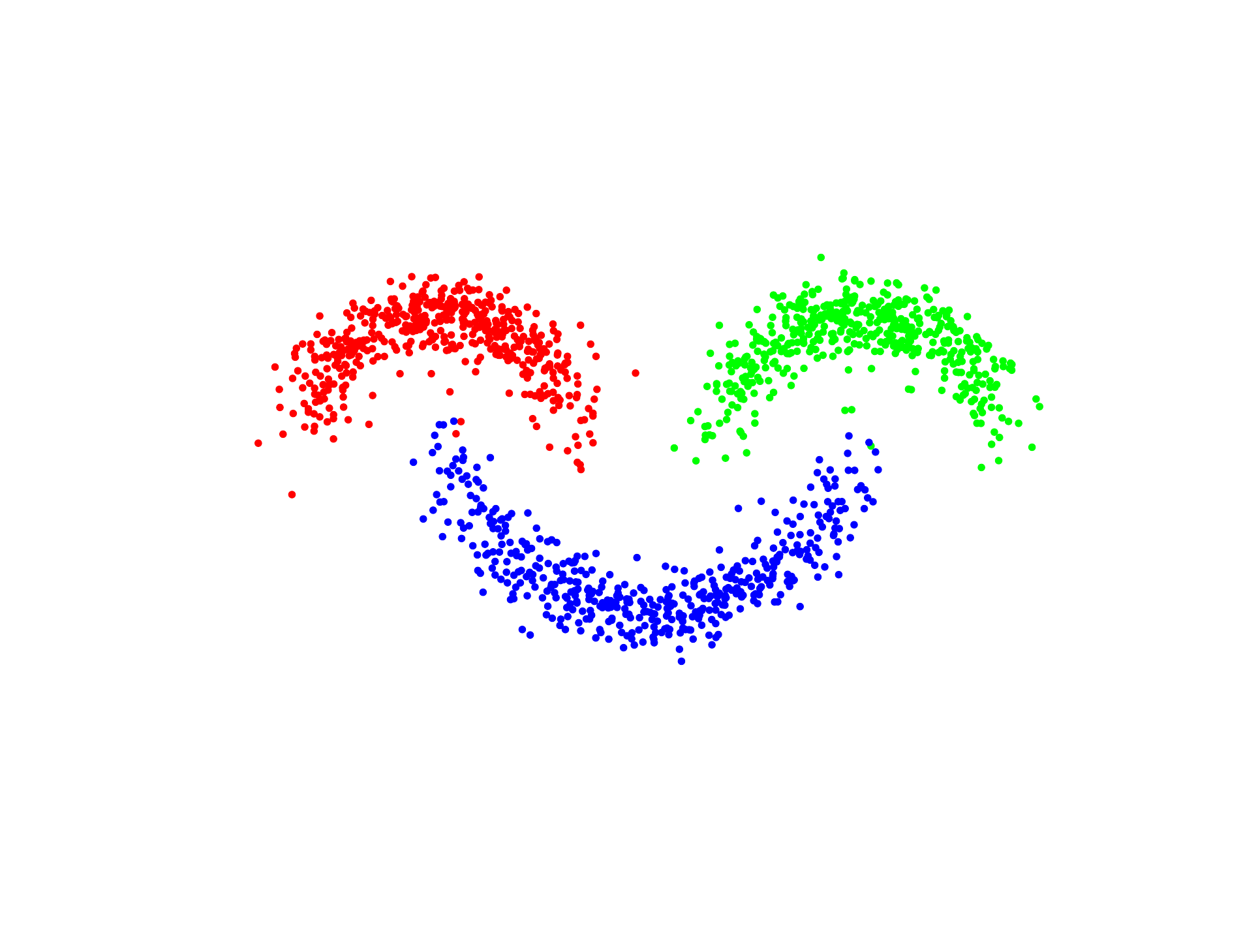} \\
(b) TVRF & (c) Our proposed method
\end{tabular}
\end{center}
\caption{Three-class classification for {\sc Three Moon} synthetic data. (a) Ground truth; (b) and (c) Results of method TVRF \cite{YT18} and our proposed method, respectively.  }
\label{fig:threemoon}
\end{figure}

A $k$-NN graph with $k=10$ is built for this data set, parameter $\sigma = 3$ is used in the Gaussian weight function, and the distance metric chosen is Euclidean metric for $\mathbb{R}^{100}$. We first test the methods using uniformly distributed supervised points, where a total number of 75 points are sampled uniformly from this data set as training points.

The accuracies of method TVRF and ours are obtained by running the methods ten times with randomly selected labeled samples, and taking the average of the accuracies. The accuracies of method CVM are copied from the original paper \cite{BM17}.
The accuracy comparison is reported in Table \ref{Table:threemoonaccuracy}, which shows that our proposed method gives the highest accuracy;
also, see Fig. \ref{fig:threemoon} for visual validation of the results between methods of TVRF and ours.
The average number of iterations taken for our proposed method is $3.8$. Fig. \ref{fig:accuracy} (a) gives the convergence history and partition accuracy of our proposed method corresponding to iteration steps, which clearly shows the accuracy increment during iterations (note that the accuracy at iteration 0 is the result of the initialization which is obtained by SVM method). Table \ref{Table:time} reports the comparison in terms of computation time, which indicates the superior performance of our proposed method in computation speed.

\begin{table}
\centering
      \begin{minipage}{0.4\linewidth}
      \centering
      \caption{Accuracy comparison for {\sc Three Moon} synthetic data set, with uniformly selected training points.}
          \label{Table:threemoonaccuracy}
      \begin{tabular}{|c c|}
	\hline
	Method & Accuracy(\%) \\
	\hline
	CVM & 98.7  \\
	GL & 98.4  \\
	MBO & 99.1   \\
	TVRF & 98.6 \\
	LapRF & 98.4   \\
	Proposed & \textbf{99.4} \\
	\hline
	\end{tabular}
     \end{minipage}
 \hspace{0.5cm}
     	\begin{minipage}{0.4\linewidth}
	\centering	
	\caption{Accuracy comparison for {\sc Three Moon} synthetic data set, with non-uniformly selected training points.}
    	\label{Table:unbalanced}
	\begin{tabular}{|c c|}
	\hline
	Method & Accuracy(\%)\\
	\hline
	TVRF & 97.8\\
	Proposed & \textbf{99.3}\\
	\hline
	\end{tabular}
	\end{minipage}
\end{table}

In the following, as a showcase, we test the methods using non-uniformly distributed supervised points, which is used to investigate the robustness of these methods on training points. In this case for the 75 training points, as an example, we respectively pick 5 points from the left and the bottom half circles, and pick the rest 65 points from the right half circle. This sampling is illustrated in Fig \ref{fig:unbalancedThreeMoon}.

\begin{figure}[htbp]
\begin{center}
\includegraphics[trim={{.2\linewidth} {.2\linewidth} {.2\linewidth} {.0\linewidth}}, clip, width=0.5\linewidth, height = 0.4\linewidth]{./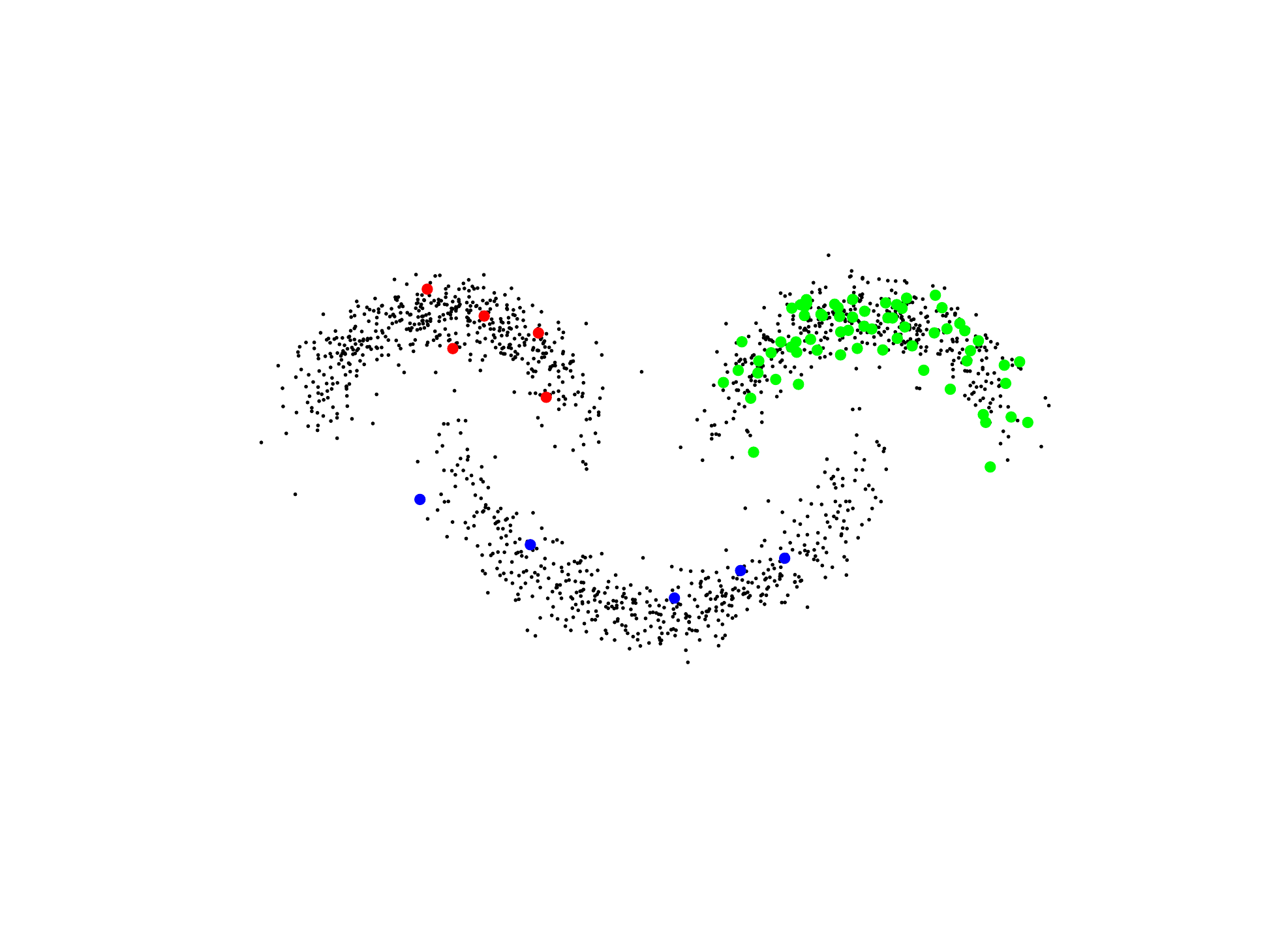}
\end{center}
\caption{Unbalanced sampling from {\sc Three Moon} data, where sampled points are highlighted with their corresponding labels. }
\label{fig:unbalancedThreeMoon}
\end{figure}

The accuracies of TVRF and our method are shown in Table \ref{Table:unbalanced}, from which we see clearly that the proposed method gives
much higher accuracy. In particular, compared to the results in Table \ref{Table:threemoonaccuracy} using training points chosen uniformly, while the accuracy of TVRF method decreases by 0.8 percent, we observe only a very small decrease in our proposed method.
This shows the robustness of our method with respect to the way that training points are selected. Note that in the case of training points chosen non-uniformly, the initialization obtained by SVM is poor, because of which more iterations are needed to converge for our method---average 12.0 iterations in 10 trials versus 3.3 iterations needed for the case of training points selected uniformly.

\subsection{COIL data}
The benchmark {\sc COIL} data comes from the Columbia object image library. It contains a set of color images of 100 different objects. These images, with size $128\times 128$ each, are taken from different angles in steps of 5 degrees, i.e., 72 ($=360/5$) images for each object. In the following, without loss of generality, we also call an image a point for ease of reference. The test data set here is constructed the same way as depicted in e.g. \cite{BM17,YT18} and is briefly described as follows.
First, the red channel of each image is down-sampled to $16\times 16$ pixels by averaging over blocks of $8\times 8$ pixels. Then, 24 out of the 100 objects are randomly selected, which amounts to 1728 $(= 24\times 360/5)$ images. After that, these 24 objects are partitioned into six classes with four objects---288 images ($= 4\times 72$)---in each class. Finally, after discarding 38 images randomly from each class, a data set of 1500 images where 250 images in each of the six classes are constructed.
To construct a graph, each image, which is a vector with length 241, is treated as a node on the graph,

For accuracy test, a $k$-NN graph with $k=4$ is built for this data set, parameter $\sigma = 250$ is used in the Gaussian weight function, and the distance metric chosen is Euclidean metric for $\mathbb{R}^{241}$. The training points, amount to 10\% of the points, are selected randomly from the data set. Again, we run the test methods 10 times and compare the average accuracy. The resulting accuracies are listed in Table \ref{Table:COILaccuracy}, showing that our method outperforms other methods. Moreover,
the average number of iterations of our method is 12.2. Fig. \ref{fig:accuracy} (b) gives the convergence history of our proposed method in
partition accuracy corresponding to iterations, which again shows an increasing trend in accuracy.

\begin{table}[htbp]
\centering
	\begin{minipage}{0.35\linewidth}
      \centering
	\caption{Accuracy comparison for {\sc COIL} data set, with uniformly selected training points.}
    \label{Table:COILaccuracy}
	\begin{tabular}{|c c|}
	\hline
	Method & Accuracy(\%)  \\
	\hline
	CVM & 93.3  \\
	TVRF & 92.5 \\
	LapRF & 87.7 \\
	GL & 91.2  \\
	MBO & 91.5   \\
	Proposed & \textbf{94.0} \\
	\hline
	\end{tabular}
    \end{minipage}
\hspace{0.5cm}
    \begin{minipage}{0.35\linewidth}
     \centering
     \caption{Accuracy comparison for {\sc MINST} data set, with uniformly selected training points.}
     \label{Table:MNISTaccuracy}
	\begin{tabular}{|c c|}
	\hline
	Method & Accuracy(\%)\\
	\hline
	CVM & \textbf{97.7}\\
	TVRF & 96.9\\
	LapRF & 96.9\\
	GL & 96.8\\
	MBO & 96.9\\
	Proposed & 97.5\\
	\hline
	\end{tabular}
    \end{minipage}
\end{table}

\subsection{MNIST data}
The MNIST data set consists of 70,000 images of handwritten digits 0--9, where each image has a size of $28\times 28$. Fig. \ref{fig:minst} shows some images of the ten digits from the data set. Each image is a node on a constructed graph. The objective is to classify the data set into 10 disjoint classes corresponding to different digits.
For accuracy test, a $k$-NN graph with $k=8$ is built for this data set, and Zelnik-Manor and Perona weight function \eqref{eqn:z-m-p_weight} is used to compute the weight matrix. The training 2500 (3.57\%) points (images) are selected randomly from the total 70,000 points.
The experimental results of the test methods are obtained by running them 10 times with randomly selected training set with a fixed number of points 2500, and the average accuracy is computed for comparison.
The accuracies of the test results are shown in Table \ref{Table:MNISTaccuracy}, which indicates that our method is comparable to or better than the state-of-the-art methods compared here. Table \ref{Table:time} shows the computation time comparison, from which we again see that our method is very competitive in computation speed. The convergence history of our proposed method in partition accuracy corresponding to iterations is given in Fig. \ref{fig:accuracy} (c), which also demonstrates a clear increasing trend in accuracy.

\begin{figure}[htbp]
\centerline{\includegraphics[trim={{.25\linewidth} {.5\linewidth} {.2\linewidth} {.4\linewidth}},clip,width=0.7\textwidth]{./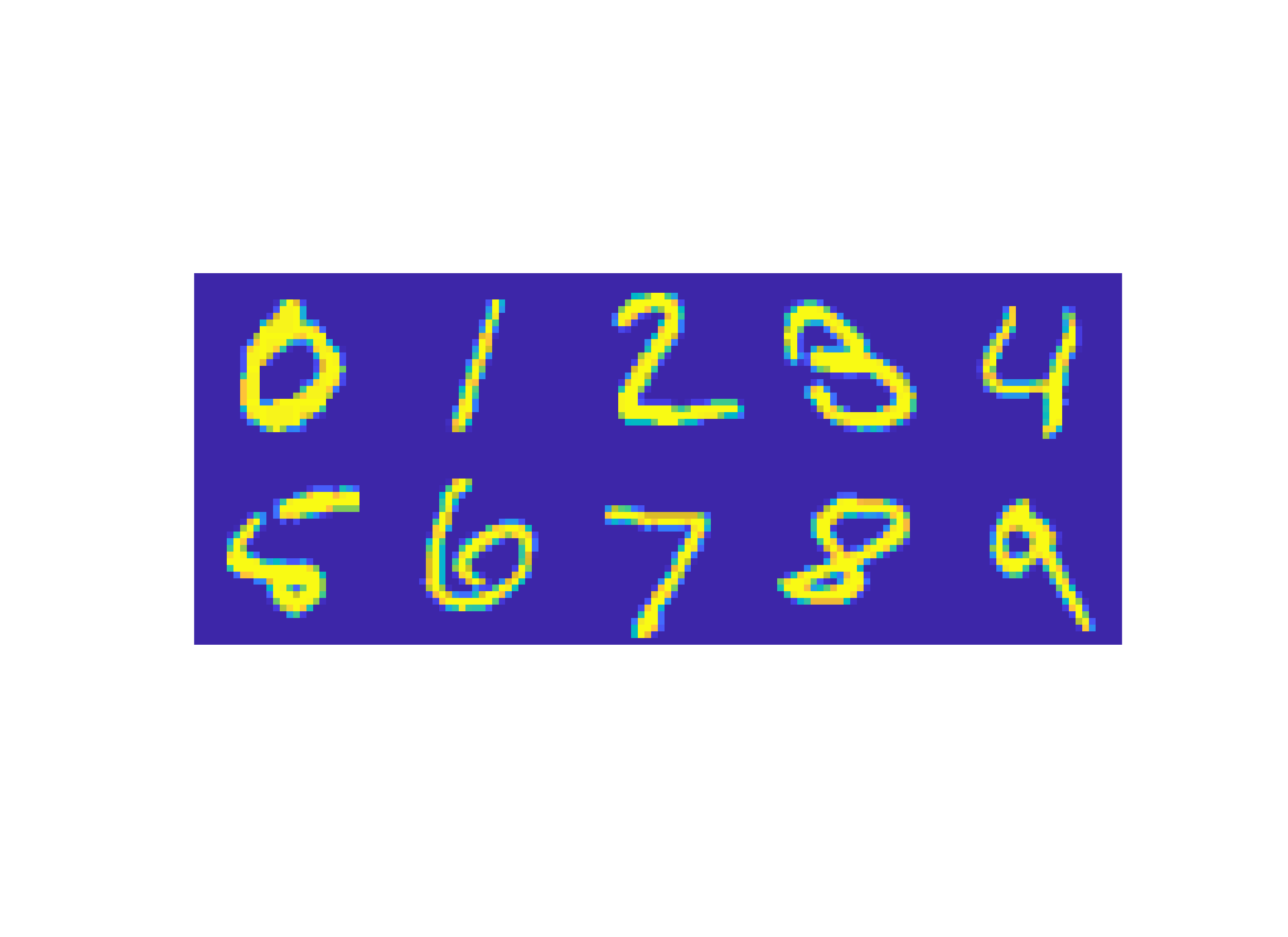}}
\caption{Examples of digits 0--9 from the MNIST data set.
}
\label{fig:minst}
\end{figure}

\subsection{Opt-Digits data}
The {\sc Opt-Digits} data set is constructed as follows.
It contains 5620 bitmaps of handwritten digits (i.e. 0--9). Each bitmap has the size of $32\times 32$ and is divided into non-overlapping blocks of $4\times 4$, and then the number of ``on" pixels are counted in each block. Therefore, each bitmap corresponds to a matrix of $8\times 8$ where each element is an integer in [0, 16]. The classification problem is to partition the data set into 10 classes.

For accuracy test, a $k$-NN graph with $k=8$ is built for this data set, parameter $\sigma = 30$ is used in the Gaussian weight function, and the distance metric chosen is Euclidean metric for $\mathbb{R}^{64}$. For the experiments on this data set, we generate three training sets respectively with the number of points 50, 100 and 150, which are all selected randomly.
All the methods are run 10 times for each training set
and the average accuracy is used for comparison. The quantitative results in accuracy are listed in Table \ref{Table:Opticsaccuracy}, from which we see that our proposed method is consistently better than the state-of-the-art methods compared for all the cases. We also observe the improvement of the accuracy of these methods w.r.t. the increasing number of points in the training set.
Finally, we show the convergence history of our proposed method in partition accuracy corresponding to iterations using the training set with 150 points in Fig. \ref{fig:accuracy} (d), which again clearly shows an increasing trend in accuracy.

\begin{table}[htbp]
\caption{Accuracy comparison for {\sc Opt-Digits} data set, with uniformly selected training points.}
\label{Table:Opticsaccuracy}
\begin{center}
\begin{tabular}{|c c c c|}
\hline
Sample rate & 0.89\%(50) & 1.78\%(100) & 2.67\%(150) \\
\hline
k-NN & 85.5 & 92.0 & 93.8  \\
SGT & 91.4 & 97.4 & 97.4  \\
LapRLS & 92.3 & 97.6 & 97.3   \\
SQ-Loss-I & 95.9 & 97.3 & 97.7  \\
MP & 94.7 & 97.0 & 97.1  \\
TVRF & 95.9 & 98.3 & 98.2   \\
LapRF & 94.1 & 97.7 & 98.1  \\
Proposed & \textbf{96.6} & \textbf{98.5} & \textbf{98.6} \\
\hline
\end{tabular}
\end{center}
\end{table}

\begin{figure}[htbp]
\begin{tabular}{cc}
\includegraphics[trim={{.0\linewidth} {.0\linewidth} {.15\linewidth} {.01\linewidth}}, clip, width=0.48\linewidth, height = 0.35\linewidth]{./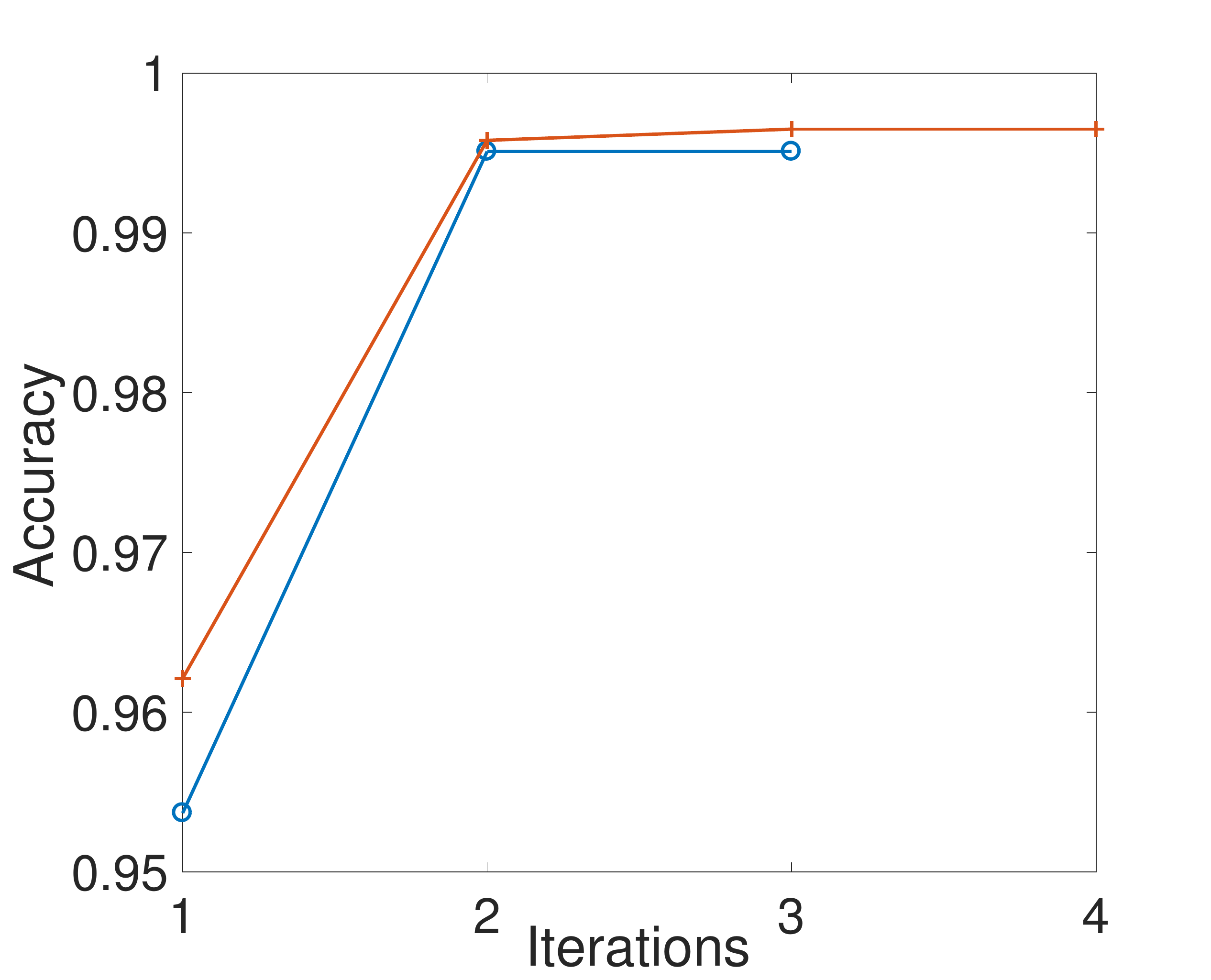} &
\includegraphics[trim={{.0\linewidth} {.0\linewidth} {.1\linewidth} {.01\linewidth}}, clip, width=0.48\linewidth, height = 0.35\linewidth]{./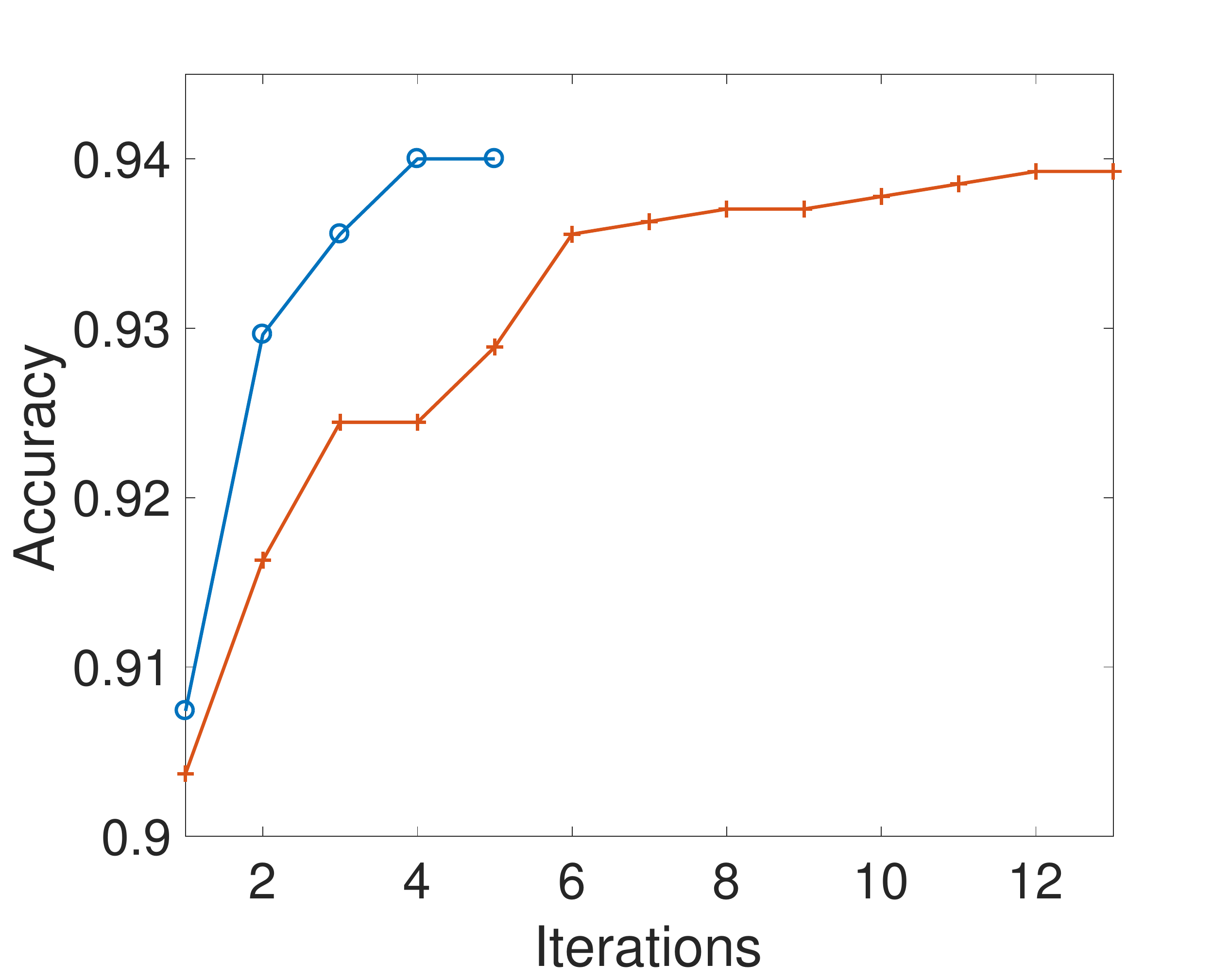}
\\
(a) \textsc{Three Moon} & (b) \textsc{COIL} \\
\includegraphics[trim={{.0\linewidth} {.0\linewidth} {.1\linewidth} {.01\linewidth}}, clip, width=0.48\linewidth, height = 0.35\linewidth]{./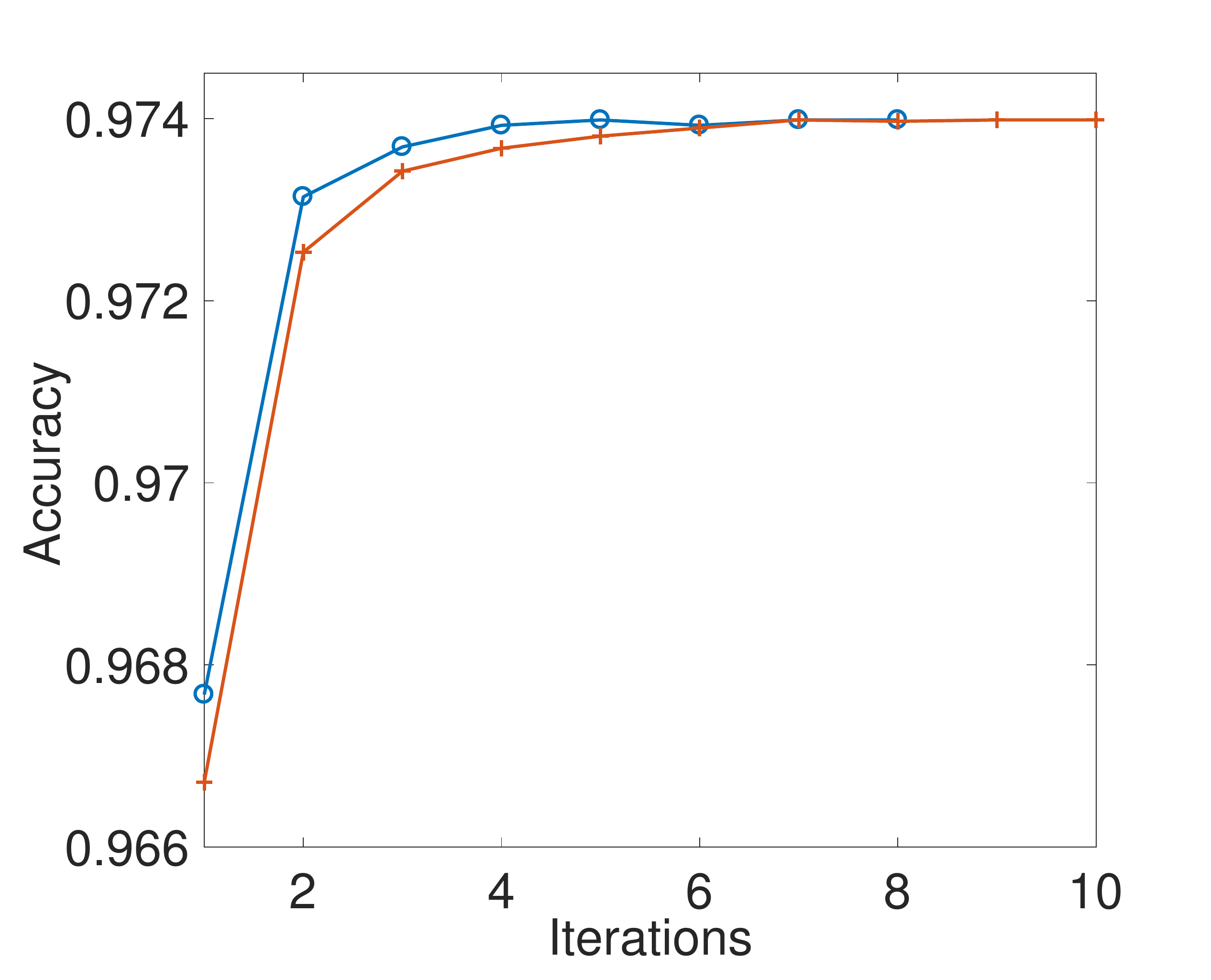} &
\includegraphics[trim={{.0\linewidth} {.0\linewidth} {.1\linewidth} {.01\linewidth}}, clip, width=0.48\linewidth, height = 0.35\linewidth]{./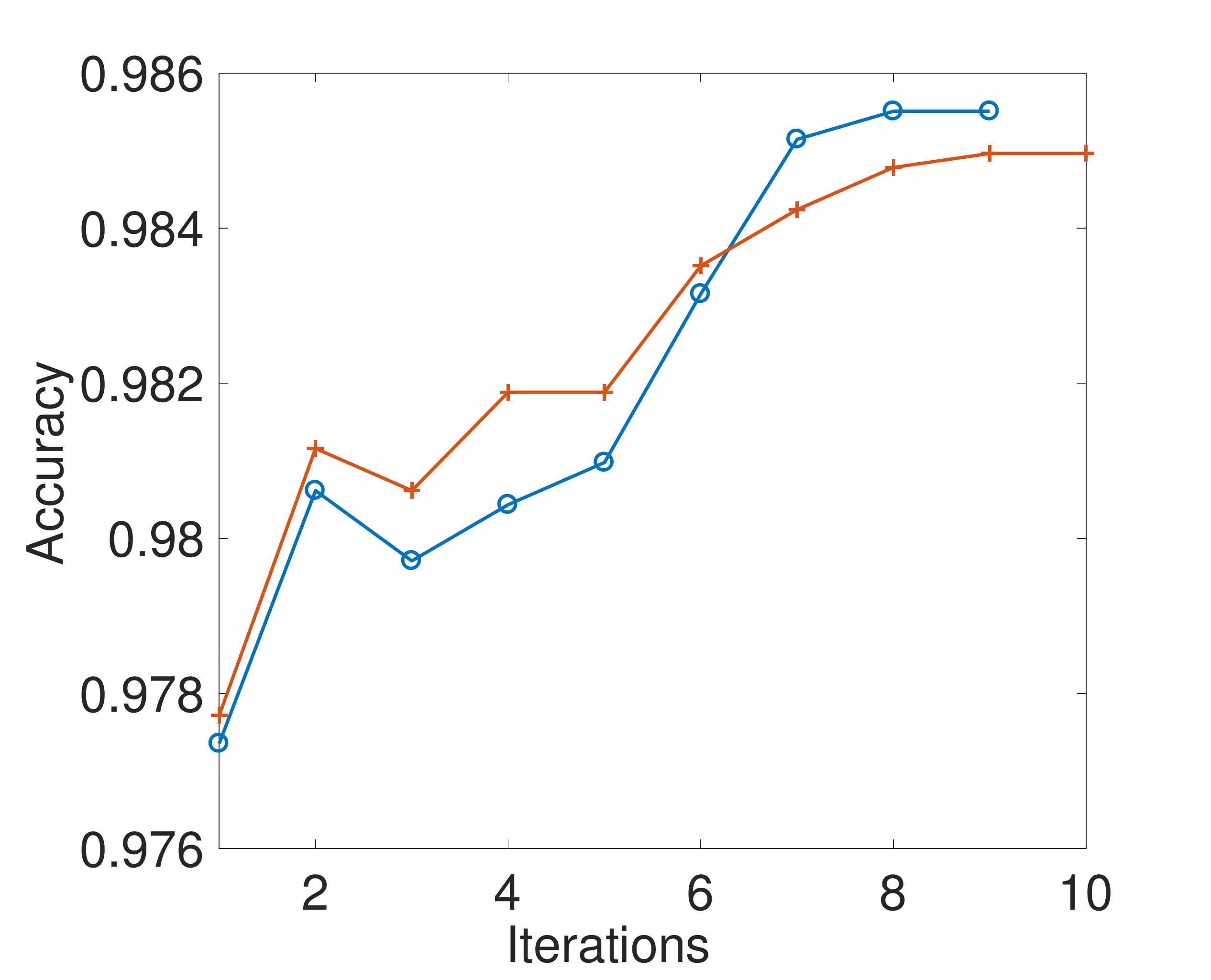}
\\
(c) \textsc{MINST} & (d) \textsc{Opt-Digits}
\end{tabular}
\caption{Accuracy convergence history of our proposed method corresponding to iteration steps for all the test data sets; training samples are uniformly selected in each class. Blue curves correspond to cases with the least number of iterations among the 10 trials and orange curves correspond to cases with the largest number of iterations among the 10 trials.}
\label{fig:accuracy}
\end{figure}


\begin{table}[htbp]
\caption{Computation time comparison in seconds. The value in the brackets of our method represents the average number of iterations of the 10 trials. (For Opt-Digits data, we select 100 sample points.)}
\begin{center}
\begin{tabular}{|c | l l l l |}
\hline
\multirow{2}{*}{Method} & \multicolumn{4}{c|}{Computation time in seconds}\\ \cline{2-5}
 & \textsc{Three Moon} & \textsc{COIL} & \textsc{MINST}  & \textsc{Opt-Digits}\\
\hline
\multicolumn{1}{|c|}{TVRF} & 0.71 & 0.65 & 66.00 &  3.42 \\
\multicolumn{1}{|c|}{Proposed} & \textbf{0.30} (3.3) & 0.76 (11.7) & 82.04 (9.4) & 4.45 (9.3) \\
\hline
\end{tabular}
\label{Table:time}
\end{center}
\end{table}

\subsection{Further discussion}
The above experimental results on the benchmark data sets in terms of classification accuracy, shown in Tables \ref{Table:threemoonaccuracy}--\ref{Table:Opticsaccuracy}, indicate that our proposed method outperforms the state-of-the-art methods for high-dimensional data and point clouds classification.

Compared to the start-of-the-art variational classification models proposed e.g. in \cite{BM17,YT18}, in addition to the data fidelity term and $\ell_1$ term (e.g. TV), our proposed model in \eqref{eqn:model-proposed} contains an additional $\ell_2$ term on the labeling functions which is used to smooth the classification results so as to reduce the non-smooth artifact (the so-called staircase artifact in images) introduced by the $\ell_1$ term. This is one reason that our method can generally achieve better results. Moreover, the warm initialization used in our method can also play a role to improve the classification quality. Apart from generating the initialization manually, any classification methods can practically be used to generate the initialization. Starting from the initialization, our proposed method can then be applied to achieve a better classification result by improving the accuracy iteratively. Theoretically speaking, the poorer the quality of the initialization, the more iterations are needed for our method. Nevertheless, we found that even for poor initializations (e.g. the ones generated randomly), 20 iterations are already enough to achieve competitive results. Generally, no more than 15 iterations are needed when using an initialization computed by standard classification methods (e.g. SVM).

Another distinction of our proposed model compared to the variational classification models in e.g. \cite{BM17,YT18} is that there are no constraints on these labeling functions in our objective functional. In other words, in each iteration, we just need to find the minimizer of the objective functional corresponding to each labeling function, but these minimizers do not need to satisfy the constraint that their summation equal to 1. Therefore, the computation speed for every single iteration is improved in our method compared to other methods which have constraints. We emphasize again that, since minimizing each sub-problem with respect to each labeling function is irrelevant to minimizing the sub-problems with respect to other labeling functions, parallelism techniques can be used straightforwardly to further improve the computation performance of our algorithm; theoretically, we just require  $1/K$ of the computation time needed for the non-parallelism scheme. This will be extremely important for large data sets.
From Table \ref{Table:time}, we see that, for all the computation time of our method, when considering the effect of parallel computing, our method should be able to outperform the state-of-the-art methods by a large margin.

\section{Conclusions}\label{sec:conclusions}
In this paper, a two-stage multiphase semi-supervised method is proposed for classifying high-dimensional data or unstructured point clouds. The method is based on the SaT strategy which has been shown very effective for segmentation problems such as gray or color images corrupted by different degradations. Starting with a proper initialization which can be obtained by using any standard classification algorithm (e.g. SVM) or constructed by users, the first stage of our method is to solve a convex variational model
without constraint. Most importantly, our proposed model is a lot easier to solve than the state-of-the-art variational models proposed recently (e.g. \cite{BM17,YT18}) for point clouds classification problem since they all need no vacuum and overlap constraint \eqref{eqn:partition} on the labeling functions in the unit simplex which could make their models to be non-convex. The second stage of our method is to find a binary partition via thresholding the smoothed result obtained from stage one. We proved that our proposed model has a unique solution and the derived primal-dual algorithm converges.

We tested our proposed method on four benchmark data sets and compared with the state-of-the-art methods. We also investigated the influence of the training sets
selected uniformly and non-uniformly. For our method, different ways of generating initializations were implemented and validated. On the whole, the experimental results demonstrated that our method is superior in terms of classification accuracy and when parallel computing is considered, computation speed too. Therefore our method is an efficient and effective classification method for data sets like high-dimensional data or unstructured point clouds.

\section*{Acknowledgements}
This work of R. Chan is partially supported by HKRGC Grants No. CityU12500915, CityU14306316, HKRGC CRF Grant C1007-15G, and HKRGC AoE Grant AoE/M-05/12. This work of T. Zeng is partially supported by the National Natural Science Foundation of China under Grant 11671002, CUHK start-up and CUHK DAG 4053296, 4053342.
We thank Prof. Xue-Cheng Tai, Dr. Ke Yin, Dr. Egil Bae and Prof. Ekaterina Merkurjev for providing the codes of their methods \cite{BM17,YT18}.

%
%

\bibliographystyle{siam}

\end{document}